\theoremstyle{definition}
\newtheorem{theorem}{Theorem}[section]
\newtheorem{theoremx}{Theorem}
\numberwithin{equation}{section}
\newtheorem{question}[theorem]{Question}
\newtheorem{lemma}[theorem]{Lemma}
\newtheorem{proposition}[theorem]{Proposition}
\newtheorem{claim}[theorem]{Claim}
\newtheorem{notation}[theorem]{Notation}
\newtheorem{algorithm}[theorem]{Algorithm}
\theoremstyle{definition}
\newtheorem{definition}[theorem]{Definition}
\newtheorem{example}[theorem]{Example}
\newtheorem{remark}[theorem]{Remark}
\newtheoremstyle{TheoremNum}
        {8pt}{8pt}              %%% space between body and theorem
        {\upshape}                      %%% theorem body font
        {}                              %%% Indent amount (empty = no indent)
        {\bfseries}                     %%% theorem head font
        {.}                             %%% Punctuation after theorem head
        {.5em}                             %%% Space after theorem head
        {\thmname{#1}\thmnote{ \bfseries #3}}%%% Thm head spec
  \theoremstyle{TheoremNum}
\newcommand{\m}{\mathfrak{m}}
\newcommand{\n}{\mathfrak{n}}
\newcommand{\suchthat}{\;\ifnum\currentgrouptype=16 \middle\fi|\;}
\newcommand{\NN}{\mathbb{N}}
\newcommand{\ZZ}{\mathbb{Z}}
\newcommand{\QQ}{\mathbb{Q}}
\newcommand{\cF}{\mathcal{F}}
\newcommand{\IN}{\operatorname{in}}
\newcommand{\Spec}{\operatorname{Spec}}
\newcommand{\Ext}{\operatorname{Ext}}
\newcommand{\HH}{\operatorname{H}}
\newcommand{\Supp}{\operatorname{Supp}}
\renewcommand{\ker}{\operatorname{ker}}
\newcommand{\depth}{\operatorname{depth}}
\newcommand{\ls}{\leqslant}%
\newcommand{\gs}{\geqslant}
\newcommand{\ds}{\displaystyle}
\newcommand{\p}{\mathfrak{p}}
\newcommand{\ov}[1]{\overline{#1}}
\newcommand{\ann}{\operatorname{ann}}
\newcommand{\OO}{\mathcal{O}}
\newcommand{\PP}{\mathbb{P}}
\renewcommand{\leq}{\leqslant}
\renewcommand{\geq}{\geqslant}
\DeclareMathOperator{\coker}{coker}
\title{Decomposition of graded local cohomology tables}
\author{Alessandro De Stefani}
\address{Dipartimento di Matematica, Universit{\`a} di Genova, Via Dodecaneso 35, 16146 Genova, Italy}
\email{destefani@dima.unige.it}
\author{Ilya Smirnov}
\address{Department of Mathematics, Stockholm University, SE - 106 91 Stockholm, Sweden}
\email{smirnov@math.su.se}
\subjclass[2010]{Primary 13D45; Secondary 13A02, 13D02}
\keywords{Local cohomology, decomposition of cohomology tables, Boij-S{\"o}derberg theory}
\begin{document}
\begin{abstract}
Let $R$ be a polynomial ring over a field. We describe the extremal rays and the facets of the cone of local cohomology tables of finitely generated graded $R$-modules of dimension at most two. Moreover, we show that any point inside the cone can be written as a finite linear combination, with positive rational coefficients, of points belonging to the extremal rays of the cone. We also provide algorithms to obtain decompositions in terms of extremal points and facets. 
\end{abstract}
\maketitle

%%%%%%%%%%%%%%%%%%%%%%%%%%%%%%%%%%%%%%%%%%%%
\section{Introduction}
%%%%%%%%%%%%%%%%%%%%%%%%%%%%%%%%%%%%%%%%%%%%

Let $R=k[x_1,\ldots,x_m]$ be a polynomial ring over a field $k$. In 2006, Boij and S{\"o}derberg formulated two conjectures regarding the cone of Betti tables of finitely generated Cohen-Macaulay modules over $R$ \cite{BS_Conj}. 
First progress towards answering the conjectures was made by Eisenbud, Fl{\o}ystad, and Weyman \cite{EFW}, who proved the existence of modules with pure resolutions associated to any degree sequence in characteristic zero. Later on, Eisenbud and Schreyer proved the conjectures \cite{ES_JAMS}, and then Boij and S{\"o}derberg extended them to the non-Cohen-Macaulay case \cite{BS_nCM}, using the techniques introduced in \cite{ES_JAMS}.
One of the main aspects of these conjectures can roughly be summarized as follows:

\begin{theorem} Given an $R$-module $M$, there exist finitely generated graded $R$-modules $N_1,\ldots,N_s$ with a pure resolution, and positive rational numbers $r_1,\ldots,r_s$, such that
\[
\ds \beta(M) = \sum_{j=1}^s r_j \beta(N_j).
\]
Here, $\beta(-)$ denotes the Betti table of a finitely generated graded $R$-module.
\end{theorem}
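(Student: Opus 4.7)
The plan is to reduce the statement to two pillars from the cited literature and an explicit decomposition algorithm. First I would set up the relevant convex-geometric framework: view the Betti table $\beta(M)$ as a point in $\bigoplus_{i,j} \QQ$, and consider the cone $B$ spanned by $\beta(N)$ over all finitely generated graded $R$-modules $N$. The claim is that $B$ is spanned by the Betti tables of modules with \emph{pure} resolutions, i.e.\ those supported on a single strictly increasing degree sequence $\mathbf{d}=(d_0<d_1<\cdots<d_p)$. Such pure Betti diagrams are rigid up to scalar, so for each admissible $\mathbf{d}$ there is a well-defined extremal ray $\pi(\mathbf{d})$.

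Next I would invoke the existence theorem of Eisenbud--Fl{\o}ystad--Weyman to guarantee, for every strictly increasing sequence $\mathbf{d}$, the existence of a module $N_{\mathbf{d}}$ with a pure resolution of type $\mathbf{d}$. This is what populates the candidate extremal rays and is crucial because the decomposition we seek must be expressible in terms of realizable Betti tables, not merely abstract rays.

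The heart of the argument is the greedy algorithm of Boij--S\"oderberg. I would order strictly increasing degree sequences by the component-wise partial order $\mathbf{d} \leq \mathbf{e}$ iff $d_i \leq e_i$ for all $i$. Given $M$, extract from $\beta(M)$ the \emph{upper} degree sequence $\mathbf{d}_{\min}$ defined by $d_i = \min\{j : \beta_{i,j}(M)\neq 0\}$; if $\beta(M) \neq 0$ one checks that $\mathbf{d}_{\min}$ is strictly increasing (this is a consequence of the Herzog--K\"uhl type relations together with positivity of the Eisenbud--Schreyer pairing against cohomology tables). Then choose the largest scalar $r>0$ such that $\beta(M) - r\cdot \beta(N_{\mathbf{d}_{\min}})$ still lies in the cone $B$. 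This new Betti table has strictly larger upper degree sequence, so the process terminates in finitely many steps.

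The main obstacle, and the step I expect to require the most care, is justifying that at each stage of the greedy algorithm the residual table
\[
\beta(M) - r\cdot \beta(N_{\mathbf{d}_{\min}})
\]
still belongs to $B$, equivalently, still pairs non-negatively with the functionals that cut out $B$. This is where the Eisenbud--Schreyer machinery of pairing Betti tables with cohomology tables of vector bundles on $\PP^{m-1}$ enters: one proves that the relevant functionals are precisely the pairings $\langle \beta(-), \gamma(\mathcal{F}) \rangle$ for a supply of vector bundles $\mathcal{F}$, and that these are non-negative on every $\beta(N)$. Modulo that positivity input, termination of the algorithm is a combinatorial induction on the poset of degree sequences, and the resulting sum furnishes the desired finite, positive, rational decomposition $\beta(M) = \sum_j r_j \beta(N_j)$.
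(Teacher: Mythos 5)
The paper does not actually prove this theorem: it is stated in the introduction as a summary of the Boij--S\"oderberg conjectures, which the paper attributes to Eisenbud--Fl{\o}ystad--Weyman \cite{EFW} (existence of pure resolutions), Eisenbud--Schreyer \cite{ES_JAMS} (proof of the conjectures in the Cohen--Macaulay case via pairings with cohomology tables of vector bundles), and Boij--S\"oderberg \cite{BS_nCM} (the general, non-CM case). So there is no in-paper proof to compare against; the relevant benchmark is the cited literature, and your sketch is indeed a faithful outline of the Eisenbud--Schreyer strategy: populate the candidate rays with pure modules, run the greedy algorithm on the minimal degree sequence, and establish positivity of the residual by pairing against cohomology tables. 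Three points where your account is imprecise. First, the fact that the minimal degree sequence $d_i = \min\{j : \beta_{i,j}(M)\neq 0\}$ is strictly increasing is an elementary consequence of minimality of the free resolution (the differentials have entries in $\m$, so $d_{i+1}\geq d_i+1$); it has nothing to do with Herzog--K\"uhl relations or positivity of the Eisenbud--Schreyer pairing. Second, your argument as written treats the Cohen--Macaulay case, where every pure degree sequence has fixed length equal to $\codim M$; the theorem as stated here is for arbitrary finitely generated graded modules, and covering that requires the Boij--S\"oderberg extension \cite{BS_nCM}, in which one decomposes along a chain of degree sequences of possibly different lengths, not just a chain in the componentwise order on sequences of a single length. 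Third, EFW produces pure resolutions only in characteristic zero; for the statement over an arbitrary field one needs the characteristic-free construction of Eisenbud--Schreyer (or a base-change reduction such as the one used in Lemma~\ref{LemmaNN} of the present paper). None of these affects the overall architecture of your proposal, but they are exactly the places where the cited proofs do genuine work that your sketch currently glosses over.
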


At the core of the proof is the study of another object, the cone of cohomology tables of vector bundles in $\PP^{m-1}$. This cone is not dual to that of Betti tables in the usual sense. However, using suitable pairings, Eisenbud and Schreyer derive information about extremal rays and supporting hyperplanes of one cone from the other. They also provide decomposition algorithms for both cones. Later, in \cite{ES_JEMS}, the same authors extend these result to cohomology tables of coherent sheaves. 
The duality between Betti tables and cohomology tables 
was later revisited by Eisenbud and Erman in \cite{EE_JEMS}, who provided a categorified version. 
Further results on categorification for the decomposition of cohomology tables 
were proved by Erman and Sam in \cite{ES}.
Recently, there has been interest in extending the theory to other settings: 
for example, \cite{FL, FLS} develop a Boij--S\"oderberg theory for coherent sheaves on Grassmannians.

In 2015, during the Bootcamp for the AMS Summer Research Institute in Algebraic Geometry at the University of Utah, Daniel Erman asked whether a theory, analogous to that for cohomology tables of coherent sheaves, could be developed for local cohomology tables of finitely generated graded $R$-modules. In this article, we work towards answering this question. We give a complete description of the extremal rays of the cone in dimension up to two, and we show that every local cohomology table inside the cone can be expressed as a finite sum of tables from the extremal rays. In what follows, we will view lower dimensional polynomial rings as $R$-modules via the isomorphisms $k[x_1,\ldots,x_i] \cong R/(x_{i+1},\ldots,x_m)$. The following is the first main result of this article.
\begin{theoremx} (see Theorem \ref{THM decomposition}) \label{THM A}
Let $R=k[x_1,\ldots,x_m]$ be a standard graded polynomial ring, let $\m=(x_1,\ldots,x_m)R$, and $M$ be a finitely generated $\ZZ$-graded $R$-module of dimension at most two. Let $S=k[x,y]$ be a standard graded polynomial ring, and $\n=(x,y)S$. There exist positive rational numbers $r_1,\ldots,r_s$ and finitely generated graded $S$-modules $N_1,\ldots,N_s \in  \{k(a),k[x](a),S(a), \n^t(a) \mid t \in \ZZ_{\geq 1}$, $a \in \ZZ\}$ such that
\[
\ds \dim_k(\HH^i_\m(M)_\ell) = \sum_{j =1}^s r_j \dim_k(\HH^i_{\n}(N_j)_\ell)
\]
for all $i \in \{0,1,2\}$ and all $\ell \in \ZZ$. Moreover, the local cohomology tables of the modules in the set above describe the extremal rays of the cone of local cohomology tables of finitely generated graded $R$-modules of dimension at most two.
\end{theoremx}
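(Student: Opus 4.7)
The plan is to decompose the local cohomology table of $M$ by successively splitting off simpler pieces, and then to deduce extremality from the positivity constraints on such decompositions.

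I would start by exploiting the short exact sequence $0 \to \Gamma_\m(M) \to M \to M/\Gamma_\m(M) \to 0$: the $\m$-torsion $\Gamma_\m(M) = \HH^0_\m(M)$ is a finite-length graded $k$-vector space, hence splits as $\bigoplus_a k(-a)^{n_a}$; its table is concentrated in the $\HH^0$ row, so subtracting it reduces the problem to the case $\depth M \geq 1$.  If $\dim M \leq 1$, then $M$ is Cohen--Macaulay of dimension at most one and only $\HH^1_\m(M)$ is nonzero.  Its Hilbert function $h(\ell) := \dim_k \HH^1_\m(M)_\ell$ is nonincreasing in $\ell$ (by graded local duality applied to $\omega_M$, using a degree-one nonzerodivisor on $\omega_M$), vanishes for $\ell \gg 0$, and is eventually constant equal to the multiplicity of $M$ as $\ell \to -\infty$.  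The finitely many nonnegative jumps $c_a := h(-a-1) - h(-a)$ then yield $h(\ell) = \sum_a c_a \dim_k \HH^1_\n(k[x](a))_\ell$.

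For $\dim M = 2$ with $\depth M \geq 1$, I would split into the CM and non-CM subcases.  In the CM subcase only $\HH^2_\m(M)$ is nonzero, and its Hilbert function has nonnegative second differences (because $\omega_M$ is CM of dimension two, so its quotient by a linear system of parameters is a nonzero finite-length module with nonnegative Hilbert function); these second differences supply the positive coefficients in a decomposition into the ramp functions $\dim_k \HH^2_\n(S(a))_\ell$.  When $\depth M = 1$, both $\HH^1_\m(M)$ and $\HH^2_\m(M)$ are nonzero; graded local duality identifies $\HH^1_\m(M)$ with the graded dual of the finitely generated $R$-module $\Ext^{m-1}_R(M, R)$, which has Krull dimension at most one.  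Consequently $\dim_k \HH^1_\m(M)_\ell$ decomposes as an eventually-constant tail (as $\ell \to -\infty$) plus finitely many bumps supported on bounded intervals.  The tail is absorbed by $k[x](a)$'s (which carry no $\HH^2$), while each bump is peeled off using an $\n^t(a)$, whose $\HH^1$ is a triangle of base $t$ and unit slope.  The delicate point is that subtracting $\n^t(a)$ also removes a copy of $\dim_k \HH^2_\n(S(a))_\ell$ from $\HH^2_\m(M)$, so the peelings must be executed in a greedy order (matching the rightmost nonzero degree of the residual $\HH^1$ bump first, which pins down both $a$ and $t$) that keeps the residual $\HH^2$ Hilbert function of nonnegative second differences; this combinatorial coordination is the main obstacle of the argument.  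Once $\HH^1$ has been consumed, the remaining pure $\HH^2$ table is decomposed via $S(a)$'s as in the CM subcase.

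Finally, the extremal rays assertion follows from a bootstrap argument using positivity.  Given any putative decomposition $T_N = \sum_i c_i T_{N_i}$ of the table $T_N$ of a building block $N$ as a nonnegative rational combination of tables of modules $N_i$ from the list, the vanishing of the $\HH^0$, $\HH^1$, or $\HH^2$ row of $T_N$ (which occurs for three of the four types) forces all $N_i$ whose table has a nonzero entry in that row to appear with coefficient zero, drastically restricting the allowed $N_i$.  Matching the remaining Hilbert functions degree by degree -- a step function for $k[x](a)$, a ramp for $S(a)$, a triangle paired with a ramp for $\n^t(a)$, and a delta for $k(a)$ -- then forces the unique choice $c_i = 1$ and $N_i = N$.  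This identifies the extremal rays of the cone with precisely the tables of the listed modules.
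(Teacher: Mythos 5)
Your overall strategy — split off $\HH^0_\m(M)$, handle the low-dimensional and Cohen--Macaulay cases by (first or second) differences of Hilbert functions, then in the depth-one dimension-two case peel off an ``eventually-constant tail'' via $k[x](a)$-tables and ``triangular bumps'' via $\m^t(a)$-tables — is essentially the greedy, table-level approach of the paper's Section~\ref{Section Algorithm}, not the module-theoretic proof the paper gives for Theorem~\ref{THM decomposition}. The two are genuinely different, and the paper itself notes this: the structural proof passes through the short exact sequence $0 \to N \to M \to P \to 0$ obtained from the torsion and free parts of the associated sheaf, where $N$ is one-dimensional Cohen--Macaulay and $P$ embeds in a graded free module with $\m$-primary quotient $C$. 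This exact sequence immediately gives the additive splitting $[\HH^1_\m(M)] = [\HH^1_\m(N)] + [\HH^1_\m(P)]$ into ``tail'' plus ``bumps,'' and the $\m$-primary structure gives the compatibility between $\HH^1$ and $\HH^2$ for free, via Lemma~\ref{decomposition_finite_length} (which shows $[R/I] = \sum r_n[R/\m^{n+1}]$ with $\sum r_n = 1$, so each $\m$-primary ideal contributes exactly one copy of $[\HH^2_\m(R)]$).

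There is a genuine gap at the point you explicitly label ``the main obstacle.'' You acknowledge that the peeling must coordinate $\HH^1$ and $\HH^2$, but you do not establish that the coordination is possible, and the greedy choice you propose — matching the rightmost nonzero degree of the residual $\HH^1$ bump, which you claim ``pins down both $a$ and $t$'' — is not well posed. The support of $[\HH^1_\m(\m^t(a))]$ is $[-a,\,t-1-a]$, so a single right endpoint determines only $t-a$, not $a$ and $t$ separately; moreover, different bumps overlap, so there is no canonical ``rightmost bump.'' The paper's Algorithm~\ref{Algorithm} resolves this by reading the degree $a$ off $\HH^2$ (from the support of its second difference), then peeling off the \emph{maximal admissible column} in that degree from $\HH^1$; making this work rests on several lemmas you do not supply (Lemma~\ref{decomposition_finite_length}, Proposition~\ref{admissible HF modules}, Lemmas~\ref{lemma subtraction one column}, \ref{lemma_maximal}, \ref{lemma extra generator}), which show that the residual $\HH^1$ remains ``monotone plus admissible'' throughout. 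Without an argument of this kind, your proposal asserts the decomposition exists rather than proving it. The extremality part of your proposal is on the right track — use vanishing rows to eliminate types, then compare entries degree-by-degree — but you should spell out, for instance, that for $\n^t(a)$ the finite support of $\HH^1$ kills the $k[x]$-contributions, and then comparison at the entry of degree $t-1-a$ rules out $S(a)$ and $\n^{t'}(a)$ with $t' < t$.
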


%More generally, we study the cone of local cohomology tables of finite graded modules of dimension at most two over $k[x_1,\ldots,x_m]$. 
%Using a Noether normalization argument, one can see that the cone of local cohomology tables of finite graded modules of dimension at most two is the same as the cone of finitely generated graded $\ell[x,y]$-modules, where $\ell$ is any extension of $k$ that is infinite (see Lemma~\ref{LemmaNN}). Therefore the problem reduces to two variables (see Section~\ref{Section dim2}).

Recall that there is a well-known relation between the local cohomology of a finite module $M$ and the cohomology of the sheaf $\widetilde{M}$ associated to $M$. The relation states that  $\bigoplus_{t \in \ZZ} \HH^i(\widetilde{M}(t)) \cong \HH^{i + 1}_\m (M)$ for $i > 0$, and there is a four-terms exact sequence:
\begin{equation} \label{equation lc}
0 \to \HH^{0}_\m (M) \to M \to \bigoplus_{t \in \ZZ} \HH^0 (\widetilde{M}(t)) \to \HH^{1}_\m (M) \to 0.
\end{equation}
However, this exact sequence is a stumbling block, and we do not see a way to obtain information on the decompositions of $\HH^0_\m(M)$ and $\HH^1_\m(M)$ from those of $\bigoplus_t \HH^0(\widetilde{M}(t))$ and $M$.

To present more differences between local cohomology and sheaf cohomology, observe that, in $\PP^1$, a decomposition of cohomology tables in terms of cohomology tables of supernatural bundles is easily seen to be finite. In fact, by taking cohomology of the exact sequence $0 \to {\rm t}(\cF) \to \cF \to \cF/{\rm t}(\cF) \to 0$, where ${\rm t}(\cF)$ denotes the torsion subsheaf of the sheaf $\cF$, we obtain that it is enough to decompose the tables of ${\rm t}(\cF)$ and $\cF/{\rm t}(\cF)$ separately. For the latter, observe that $\cF/{\rm t}(\cF)$ is a direct sum of line bundles. For the former, using that $\HH^1 ({\rm t}(\cF)) = 0$ and $\dim_k \HH^0 ({\rm t}(\cF)(d)) = \chi ({\rm t}(\cF)(d))$ is a constant, one can decompose the table ${\rm t}(\cF)(d)$ using skyscraper sheaves. In the case of local cohomology tables, finiteness of the decomposition in $k[x,y]$ is a consequence of Theorem \ref{THM A}, but this requires a significant amount of work, as we will show in Section \ref{Section dim2}. In $\PP^2$, a decomposition of sheaf cohomology tables in terms of extremal points may not be finite, as shown in \cite[Example~0.3]{ES_JEMS}.  On the other hand, \cite[Theorem~0.1]{ES_JEMS} asserts that every point in the cone will be given by a convergent series of extremal points, given by supernatural cohomology tables. Given that the arguments for $\PP^1$ and $k[x,y]$ are significantly different, there is still a possibility that the decomposition of local cohomology tables in terms of extremal points is always finite. We hope to provide an answer to this question in future work.

Another important aspect of Boij--S{\" o}derberg theory is the dual description of the cone spanned by Betti tables 
by non-negative functionals. In other words, while it is very hard to say when a given table is a Betti table, 
it is possible to characterize completely tables such that some multiple is a Betti table. 
We provide an answer in the following form.

\begin{theoremx}(see Theorem~\ref{facet thm} and Algorithm~\ref{facet algo}) \label{THM B}
Let $R=k[x_1,\ldots,x_m]$, $\m=(x_1,\ldots,x_m)$, and let $\mathbb M$ denote the space of $\ZZ \times 3$ matrices with finitely many non-zero entries. 
Then (Proposition~\ref{proposition differences}) we can identify a local cohomology table of 
a finitely generated graded $R$-module of dimension at most two with a matrix in $\mathbb M$.
Furthermore,  a matrix $A = \{a_{i,j}\} \in \mathbb M$ is in 
the cone spanned by the images of local cohomology tables of $\ZZ$-graded $R$-modules of dimension at most two
if and only if the entries of $A$ satisfy the following inequalities: 
\begin{itemize}
\item $0 \leq a_{0, s}$ for $s \in \ZZ$,  
\item $0 \leq a_{1,s} + \sum_{i \leq s-1} a_{2,i}$ for $s \in \ZZ$,
\item $0 \leq a_{2,s}$ for $s \in \ZZ$,
\item $0 \leq \sum_{i > s+n} a_{1,i} + (n+1)a_{1,s+n} + \sum_{i=0}^{n-1} (i+1)a_{2,s+i}$ for $s \in \ZZ$ and $n \in \ZZ_{\geq 0}$.
\end{itemize}
\end{theoremx}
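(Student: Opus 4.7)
The plan is to establish both directions of the equivalence, leveraging Theorem~A throughout to connect the extremal structure of the cone to its facets.

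\textbf{Necessity.} Since each of the four families of inequalities is linear in the entries of $A$, by Theorem~A it is enough to verify each one on the extremal rays, i.e.\ on the matrices associated (via Proposition~\ref{proposition differences}) to the local cohomology tables of $k(a)$, $k[x](a)$, $S(a)$, and $\n^t(a)$. Each of these matrices has only a handful of non-zero entries concentrated in one or two columns, so the four inequalities reduce to elementary arithmetic statements verifiable by direct calculation. The first and third inequalities are just non-negativity of dimensions; the second telescopes, using the differences defining the $a_{i,s}$, to a statement comparing $\dim_k H^1_\n$ and $\dim_k H^2_\n$ at a fixed internal degree; and the fourth unwinds to a weighted sum of cohomology dimensions, hence is non-negative.

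\textbf{Sufficiency.} Given $A\in\mathbb{M}$ satisfying all four families of inequalities, I will exhibit a finite non-negative rational combination of extremal tables from Theorem~A that equals $A$. The construction is algorithmic (Algorithm~\ref{facet algo}). Row $0$ is handled independently by subtracting $a_{0,s}$ copies of the table of $k(-s)$ for each $s$, which affects only row $0$ and preserves all the inequalities. The residual part of $A$, supported in rows $1$ and $2$, is then decomposed by sweeping through the support from one end, at each column peeling off appropriate non-negative rational multiples of the tables of $k[x](a)$, $\n^t(a)$, and $S(a)$. Termination is guaranteed by the finite support of $A$, and the invariance of the inequalities under each subtraction is checked directly.

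\textbf{Main obstacle.} The subtle step is the joint treatment of rows $1$ and $2$: after applying the differences of Proposition~\ref{proposition differences}, the table of $\n^t(a)$ contributes entries of both signs in row $1$ together with a non-zero entry in row $2$, so the choice of $t$ and $a$ at each stage must be finely calibrated to the current residual profile of $A$. The fourth family of inequalities, indexed by $n \geq 0$, is precisely the combinatorial data that determines at each step which $\n^t(a)$-ray can be peeled off without violating any other inequality. Proving that this bookkeeping is internally consistent, that the algorithm terminates, and --- equivalently --- that the stated inequalities exhaust all the facets of the cone (none is missing), will be the main technical content of the proof.
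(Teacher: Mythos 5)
Your strategy matches the paper's: verify necessity by checking the linear inequalities on the extremal rays from Theorem~\ref{THM decomposition}, and prove sufficiency by a greedy algorithm that reduces an arbitrary matrix in the cone to zero by peeling off extremal rays. The necessity half is fine as stated (it really is a finite direct check on each $E_{i,s}$ and $\Gamma_s(n)$). But the sufficiency half, as written, is a promise rather than a proof: you describe \emph{that} you will peel off rays by ``sweeping through the support'' and ``finely calibrating'' the choice of $t$ and $a$, and you explicitly defer ``the main technical content'' (which ray to remove, with what coefficient, why the inequalities survive each subtraction, and why the process terminates). That deferred content is precisely the theorem.

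Concretely, what is missing corresponds to three pieces the paper cannot do without. First, Lemma~\ref{Lem Not Gamma}, which identifies $\Supp(\Gamma_{d-1-k}(k))$ --- the exact set of facet functionals that $\Gamma_{d-1-k}(k)$ fails to annihilate --- and gives their values; this is what lets one read off the largest admissible coefficient $m$ as a finite minimum of ratios $H(A)/H(\Gamma)$. Second, Lemma~\ref{triangular identities} ($\phi_i + 2\pi_{k,i+1} = \pi_{k+1,i} + \pi_{k-1,i+2}$, etc.), which is used to show that when some $\pi_{n,s}$ vanishes after a subtraction, a chain of other $\pi$-functionals must vanish too, forcing the invariant needed to advance the loop (the claim inside Algorithm~\ref{facet algo} that reaching $k=w$ forces $a_{2,\bullet}=0$). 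Third, the inductive set-up on the support window $[0,d]$ together with Remark~\ref{restrict H}, which expresses the restricted functionals $\pi_{n,d-1-n}$ as non-negative combinations of the smaller window's functionals and thereby lets the algorithm recurse. Without these, the claims that the bookkeeping is ``internally consistent'' and that no facet is missing are unsupported; a sweeping order that is not the paper's can easily strand a residual matrix that still satisfies all four families of inequalities but cannot be further decreased by any single extremal ray without breaking one of them. To turn this sketch into a proof you would need to produce explicit analogues of these lemmas and the inductive window argument.
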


A fundamental aspect of our work are greedy decomposition algorithms accompanying both main theorems. 
In Section~\ref{Section Algorithm} and Algorithm~\ref{facet algo} we explain how to decompose, in terms of extremal points of the cone, a given local cohomology table of a finitely generated $k[x,y]$-module or a matrix satisfying the inequalities of Theorem~\ref{THM B}. We point out that the proof of Theorem~\ref{THM A} could be turned into an algorithm to obtain such a decomposition and this proof may produce a different decomposition than the one coming from the greedy algorithm of Section~\ref{Section Algorithm}. The advantage of the strategy used in the proof of Theorem \ref{THM A} is that it provides a shorter and more conceptual argument; the disadvantage is that it requires knowledge of the module $M$. The greedy algorithm provided in Section \ref{Section Algorithm}, while being less transparent and more computational in nature, only requires knowledge of the local cohomology table of $M$.

\section{Notation and background}

In what follows, let $R=k[x_1,\ldots,x_m]$ be a polynomial ring over a field $k$. We will always view $R$ with its standard grading, that is, $\deg(x_i)=1$ for all $i$. We can write $R= \bigoplus_{n \geq 0} R_n$, where $R_n$ is the $k$-vector space spanned by the monomials in $x_1,\ldots,x_d$ of degree $n$. We will use $\m$ to denote the irrelevant maximal ideal $\bigoplus_{n \geq 1} R_n$.

Local cohomology was introduced by Grothendieck \cite{Hartshorne}. One way to define it is as follows.
Given a $\ZZ$-graded $R$-module $M = \bigoplus_{n \in \ZZ} M_n$, we consider the $\check{\mbox{C}}$ech complex:
\[
\xymatrixcolsep{5mm}
\xymatrix{
\ds \check{\mbox{C}}^\bullet(M)\colon 0 \ar[r] & M \ar[r] & \ds \bigoplus_{j=1}^m M_{x_j} \ar[r] & \ds \bigoplus_{1 \leq i < j \leq m} M_{x_ix_j} \ar[r] & \ldots \ar[r] & \ds \bigoplus_{i=1}^m M_{x_1\cdots \widehat{x_i} \cdots x_m} \ar[r] & M_{x_1\cdots x_m} \ar[r] & 0,
}
\]
which is a complex of $\ZZ$-graded modules. Each map is just a localization, up to an appropriate  sign choice that makes $\check{\mbox{C}}^\bullet(M)$ into a complex. For $i \in \ZZ$, the local cohomology modules
\[
\ds \HH^i_\m(M) = \HH^i(\check{\mbox{C}}^\bullet(M))
\]
are $\ZZ$-graded Artinian $R$-modules. It is well-known that, if $\HH^i_\m(M)\ne 0$, then $\depth(M) \leq i \leq \dim(M)$ and these bounds are sharp. Given a finitely generated $\ZZ$-graded $R$-module $M$, for $j \in \ZZ$ we let
\[
\ds h^i(M)_j = \dim_k(\HH^i_\m(M)_j),
\]
where the subscript $j$ denotes the $j$-th graded component of $\HH^i_\m(M)$. It is well-known that all these dimensions are finite. We collect these numbers in a matrix with $\ZZ$-many rows, and $d+1$ columns:
\[
\ds [\HH^\bullet_\m(M)] = (h^i(M)_j)_{j \in \ZZ, 0 \leq i \leq d} \in {\rm Mat}_{\ZZ,d+1}(\ZZ_{\geq 0}).
\]
Finally, for $i=0,\ldots,d$, we denote by $[\HH^i_\m(M)]$ the $(i+1)$-st column of the matrix $[\HH^\bullet_\m(M)]$, that is, the column with entries $(h^i(M)_j)_{j \in \ZZ}$.
The following is the main question we investigate in this article.
\begin{question} \label{question}
Let $R=k[x_1,\ldots,x_m]$, where $k$ is a field. Is there a set $\Lambda_d$ of local cohomology tables of finitely generated $\ZZ$-graded $R$-modules that satisfies the following two conditions?
\begin{enumerate}
\item Given any finitely generated $\ZZ$-graded $R$-module $M$ with $\dim(M) \leq d$, there exist finitely many positive rational numbers $r_1,\ldots,r_s$ and tables $\HH_1,\ldots,\HH_s \in \Lambda_d$ such that $[\HH^\bullet_\m(M)] = \sum_{j=1}^s r_j\HH_j$.
\item The set $\Lambda_d$ is minimal, that is, none of the elements of $\Lambda_d$ can be obtain as a finite positive rational linear combination of other elements from $\Lambda_d$. 
 
\end{enumerate}
\end{question}

Observe that, if such a set $\Lambda_d$ exists, the local cohomology tables of modules from $\Lambda_d$ define the extremal rays of the cone of local cohomology tables of finitely generated graded $R$-modules. 

In relation to the above, we are also interested in a dual description of the cone, in terms of its facets. In other words, our goals include a description of the linear functionals that cut out the cone in the space of all tables.

%In other words, Question \ref{question} asks whether the local cohomology table of any finitely generated $R$-module can be constructed as a finite sum of local cohomology tables of modules from $\Lambda_d$. 

In this article we provide an answer to Question \ref{question} when $d \leq 2$ (Sections \ref{Section dim1} and \ref{Section dim2}). We first show that, in general, the study of $\Lambda_d$ reduces to understanding local cohomology tables of modules over polynomial rings in $d$ variables. Moreover, we provide the facet description of the cone, in terms of the supporting hyperplanes, again for $d\leq 2$. This is done in Section \ref{Section Algorithm facets}.

Both problems actually reduce to the study of local cohomology tables of finite graded modules over $k[x,y]$, with $k$ and infinite field, by means of the following lemma.

\begin{lemma}\label{LemmaNN}
Let $k \subseteq \ell$ be fields, with $|\ell|=\infty$, let $R=k[x_1,\ldots,x_m]$, and $S=\ell[y_1,\ldots,y_d]$. The cone of local cohomology tables of finite graded modules over $R$ of dimension at most $d$ equals the cone of local cohomology tables of finite graded $S$-modules.
\end{lemma}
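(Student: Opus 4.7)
My plan is to prove the two inclusions separately. The three main tools I would rely on are: flat base change for local cohomology (applied to the extension $k \subseteq \ell$); Noether normalization via generic linear forms, which is available because $\ell$ is infinite; and the independence-of-ring principle for local cohomology, namely that $\HH^i_I(M) \cong \HH^i_{I'}(M)$ whenever $I$ and $I'$ have the same radical modulo $\Ann(M)$, together with the identification of local cohomology across a module-finite ring extension.

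For the inclusion ``$R$-cone $\subseteq$ $S$-cone'', I would start with $M$ a finite graded $R$-module of dimension at most $d$. First set $M' := M \otimes_k \ell$, a finite graded module over $R' := \ell[x_1,\ldots,x_m]$ of the same Krull dimension. Flatness of $\ell/k$ gives $\HH^i_{\m R'}(M') \cong \HH^i_\m(M) \otimes_k \ell$ in every graded degree, so $\dim_\ell \HH^i_{\m R'}(M')_j = \dim_k \HH^i_\m(M)_j$ for all $i,j$, and the cohomology table is preserved. Since $\ell$ is infinite and $\dim M' \leq d$, prime avoidance produces $d$ linear forms $\theta_1,\ldots,\theta_d \in R'_1$ forming a homogeneous system of parameters for $M'$. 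They are $\ell$-linearly independent, so $A := \ell[\theta_1,\ldots,\theta_d]$ is a polynomial subring of $R'$ isomorphic to $S$ via $y_i \mapsto \theta_i$, and Noether normalization makes $M'$ finite over $A$. Because $(\theta_1,\ldots,\theta_d)R'$ and $\m R'$ coincide up to radical modulo $\Ann(M')$, the independence principle together with the module-finite extension $A \hookrightarrow R'$ yields $\HH^i_{\m R'}(M') \cong \HH^i_{\m_A}(M')$ in every graded degree, so the table of $M$ coincides with the table of $M'$ viewed as an $S$-module.

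For the reverse inclusion I let $N$ be a finite graded $S$-module, implicitly assuming $m \geq d$. First view $N$ as a module over $\ell[x_1,\ldots,x_m]$ via the quotient that kills $x_{d+1},\ldots,x_m$; by independence of the ambient ring the table is preserved. When $k = \ell$ this already gives a finite $R$-module with the required table. For $k \subsetneq \ell$ I would descend from $\ell$ to $k$: since $N$ is finitely presented, it is defined over a finitely generated $k$-subalgebra $T \subseteq \ell$, providing a graded $T[y_1,\ldots,y_d]$-module $\mathfrak{N}$ with $\mathfrak{N} \otimes_T \ell \cong N$. Using generic freeness and specialization at a closed point of $\Spec T$ whose residue field $k'$ is a finite extension of $k$, I would produce a $k'[y_1,\ldots,y_d]$-module whose table agrees with that of $N$. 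Restriction of scalars along $k \hookrightarrow k'$ then yields a finite $k[y_1,\ldots,y_d]$-module whose table is $[k':k]$ times the table of $N$, placing the latter in the cone generated by finite graded $R$-modules of dimension at most $d$ after the obvious rational rescaling.

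The main obstacle I foresee is the specialization step in the reverse direction. Local cohomology modules over $T[y_1,\ldots,y_d]$ need not be finitely generated over $T$, so generic freeness cannot be invoked for $\HH^i_\m(\mathfrak{N})$ as a whole. The intended fix is to apply generic freeness graded component by graded component to the \v{C}ech complex of $\mathfrak{N}$ and then pass to cohomology, using that each graded piece of a \v{C}ech module is a directed union of finitely generated $T$-submodules for which flatness at a generic point passes to the cohomology in the fixed degree. The forward direction, by contrast, is a clean application of the three standard tools listed above and is, for the purposes of the rest of the paper, the substantive content of the lemma.
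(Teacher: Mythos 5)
Your overall strategy is the same as the paper's: extend the base field, Noether-normalize over the infinite field, and descend. The paper organizes it slightly differently — it first reduces to the case $k = \ell$ (so that the reverse inclusion becomes the trivial observation that $S = \ell[y_1,\ldots,y_d]$ is a quotient of $\ell[x_1,\ldots,x_m]$), and it outsources the descent from $\ell$ back to $k$ to \cite[Lemma 9.6]{EE_JEMS} rather than carrying it out — but the mathematical content is identical. Your forward direction (tensor up, use prime avoidance to get a linear Noether normalization, use invariance of local cohomology under module-finite extensions with the same radical) is complete and correct.

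The descent step you flag is indeed the one genuine subtlety, and your proposed fix does not quite work as stated: applying generic freeness term-by-term to the \v{C}ech complex of $\mathfrak{N}$ fails because the \v{C}ech modules $\mathfrak{N}_{y_{i_1}\cdots y_{i_s}}$ are not finitely generated over $T$, and their individual graded pieces, while directed unions of finitely generated $T$-modules, are still not finitely generated, so generic freeness has no object to grab onto. The clean route is to avoid the \v{C}ech complex entirely and use graded local duality over $T[y_1,\ldots,y_d]$: take a finite graded free resolution $F_\bullet \to \mathfrak{N}$, form the finite free complex $\Hom_{T[y]}(F_\bullet, T[y](-d))$, and apply generic freeness (and generic flatness of the cocycle/coboundary modules) to shrink $\Spec T$ so that the cohomology of this complex commutes with base change along any $T \to k'$. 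After specializing at a closed point with residue field $k'$ finite over $k$, graded local duality over $k'[y]$ recovers the local cohomology table, and restriction of scalars along $k \hookrightarrow k'$ multiplies it by $[k':k]$, giving a positive rational multiple in the $R$-cone exactly as you intend. With this replacement, your argument closes; the paper simply cites \cite{EE_JEMS} for this part instead of spelling it out.
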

\begin{proof} First observe that, when studying the cone of local cohomology tables, we may always extend the base field without losing any generality, using considerations along the lines of \cite[Lemma 9.6]{EE_JEMS}. In fact, every local cohomology table over $R$ is naturally a local cohomology table over $R_\ell = R \otimes_k \ell$; conversely, every local cohomology table over $R_\ell$ is a multiple of a local cohomology table over $R$.

We will therefore assume that $k=\ell$ is infinite, without losing any generality. Let $M$ be a finitely generated graded $R$-module of dimension at most $d$. Let $A = \ell[z_1,\ldots,z_t]$ be a graded Noether normalization of $R/\ann_R(M)$, where $t \leq d$ is forced by our assumptions. We can view $A$ as a finite graded $S$-module by sending $y_i$ to $z_i$ for $1 \leq i \leq t$, and the remaining $y_i$ to zero. Since $M$ is a finitely generated graded $R/\ann_R(M)$-module, and $S \to A \to R/\ann_R(M)$ is finite, $M$ is a finitely generated graded $S$-module with respect to the standard grading on $S$. Therefore the local cohomology table of $M$ belongs to the cone of local cohomology tables of finite $S$-modules. Conversely, every finite $S$-module can be viewed as a finite $R$-module of dimension at most $d$ via the map $R \to S$ that sends $x_i$ to $y_i$ for $1 \leq i \leq d$, and the remaining $x_i$ to zero. 
\end{proof}

\begin{remark}\label{depth and dim}
In the rest of the article, we will tacitly make use of Lemma~\ref{LemmaNN}, and 
study the cone of local cohomology tables of modules of dimension at most two
by working with polynomial rings in at most two variables over an infinite field.

Moreover, there is little harm in working with modules with positive depth. Namely,
we may decompose the table $[\HH^\bullet_\m(M)] = [\HH^0_\m(M)]+[\HH^\bullet_\m(M/\HH^0_\m(M))]$
and note that the decomposition of $\HH^0_\m(M)$ as $k$-vector space gives a decomposition of 
its local cohomology table by elements of the form $[\HH^\bullet_\m (k(a))]$.
\end{remark}

%%%%%%%%%%%%%%%%%%%%%%%%%%%%%%%%%%%%%%%%%%%%%
\section{Decomposition of graded local cohomology tables in dimension one} \label{Section dim1}
%%%%%%%%%%%%%%%%%%%%%%%%%%%%%%%%%%%%%%%%%%%%%
When $R=k$ is a field, one can immediately see that the set $\Lambda_0= \{[\HH^\bullet_\m(k(a))] \mid a \in \ZZ\}$ provides an answer to Question \ref{question}. 
Finitely generated modules over $R = k[x]$ are also very well-understood, since $R$ is a PID. We will show in this section that $\Lambda_1 = \{[\HH^\bullet_\m(k(a))], [\HH^\bullet_\m(k[x](a))] \mid a \in \ZZ\}$.

\begin{theorem} \label{THM Dim 1}
Let $R=k[x]$. The local cohomology table of every finitely generated graded $R$-module can be expressed as a finite sum, with positive integer coefficients, of local cohomology tables of the form $[\HH^\bullet_\m(k(a))]$ and $[\HH^\bullet_\m(k[x](a))]$, for $a \in \ZZ$. Moreover, the set these tables form is minimal, so that $\Lambda_1=\{[\HH^\bullet_\m(k(a))], [\HH^\bullet_\m(k[x](a))] \mid a \in \ZZ\}$ provides an answer to Question \ref{question}.
\end{theorem}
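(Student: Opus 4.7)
The plan is to exploit that $R = k[x]$ is a graded principal ideal domain so that the graded structure theorem applies. First, I would use that the quotient $M/\HH^0_\m(M)$ is a finitely generated torsion-free graded $R$-module, hence graded free, to split the short exact sequence $0 \to \HH^0_\m(M) \to M \to M/\HH^0_\m(M) \to 0$ in the graded category and obtain $M \cong T \oplus F$, where $T = \HH^0_\m(M)$ and $F \cong \bigoplus_j R(a_j)$. Taking local cohomology of this decomposition, the contribution of $F$ is $\sum_j[\HH^\bullet_\m(k[x](a_j))]$, which is already a finite sum, with coefficients equal to $1$, of elements of $\Lambda_1$.

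To handle $T$, which is a finite-dimensional graded $k[x]$-module annihilated by some $x^N$, I would invoke the graded version of the structure theorem for finitely generated modules over the PID $k[x]$: the homogeneous nilpotent operator ``multiplication by $x$'' on $T$ admits a Jordan-like decomposition in a homogeneous basis, yielding $T \cong \bigoplus_i R/(x^{e_i})(b_i)$. Each such summand is $\m$-torsion, so its local cohomology is concentrated in column $0$ and equals the module itself; a direct check with the shift convention yields
\[
[\HH^\bullet_\m(R/(x^{e})(b))] = \sum_{i=0}^{e-1}[\HH^\bullet_\m(k(b-i))].
\]
Combining this with the contribution of $F$ expresses $[\HH^\bullet_\m(M)]$ as a sum, with positive integer coefficients, of elements of $\Lambda_1$.

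For minimality, I would exploit the rigid shapes of the tables in $\Lambda_1$: $[\HH^\bullet_\m(k(a))]$ is supported at the single position $(0,-a)$ with value $1$ and is zero elsewhere, while $[\HH^\bullet_\m(k[x](a))]$ is zero in column $0$ and equal to the characteristic function of $\{j \in \ZZ : j \leq -a-1\}$ in column $1$. Given a hypothetical relation $\tau = \sum_i r_i \tau_i$ with $r_i > 0$ and $\tau_i \in \Lambda_1 \setminus \{\tau\}$, non-negativity of entries together with the vanishing of column $1$ (resp.\ column $0$) of $\tau$ forces all $\tau_i$ to lie in the same family as $\tau$. In the $k(a)$ case, the value $1$ at position $(0,-a)$ cannot be reproduced by summands $k(a_i)$ with $a_i \neq a$. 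In the $k[x](a)$ case, comparing the column-$1$ entries at degrees $-a-1$ and $-a$ forces $\sum_{a_i < a} r_i = 1$ and $\sum_{a_i < a} r_i = 0$ simultaneously, a contradiction.

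The main subtlety, rather than a genuine obstacle, lies in justifying that the decomposition of the torsion part $T$ can be taken to be graded, which is the graded structure theorem for a graded PID: this is folklore but usually requires a short separate argument, either through homogeneous Smith normal form applied to a graded presentation of $T$ or by a direct induction on $\dim_k T$ picking a homogeneous generator of maximal order. The rest of the proof — the splitting, the computation of the cohomology tables of the cyclic summands, and the minimality — is then essentially bookkeeping with the shift conventions.
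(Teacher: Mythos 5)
Your proof is correct and takes essentially the same route as the paper: reduce to the torsion/free decomposition, handle each piece, and argue minimality by inspecting specific table entries. The one place you do a bit more work than necessary is the torsion part $T = \HH^0_\m(M)$: you invoke the graded structure theorem to write $T \cong \bigoplus_i R/(x^{e_i})(b_i)$ and then unwind each cyclic summand, whereas the paper's Remark~\ref{depth and dim} observes that since $T$ is $\m$-torsion, its local cohomology table is supported in column $0$ and equals the Hilbert function of $T$, so any graded $k$-basis of $T$ already gives the decomposition into tables $[\HH^\bullet_\m(k(a))]$ without knowing the $R$-module structure of $T$ at all; this also sidesteps the need to justify a graded structure theorem for $k[x]$ (which is true, but extra machinery here).
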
 
\begin{proof}
By Remark~\ref{depth and dim}, we may assume that $M$ is positive depth and, therefore, it decomposes as a direct sum of $R(a)$.

To conclude the proof, we need to show that the set $\{[\HH^\bullet_\m(k(a))],[\HH^\bullet_\m(k[x](a))] \mid a \in \ZZ\}$ is minimal. To do so, we distinguish two cases:
\begin{enumerate}
\item First assume that there exist $\lambda_r,\mu_s \in \QQ_{\geq 0}$ such that
\[
\ds [\HH^\bullet_\m(k(a))] = \sum_{r \ne a} \lambda_r [\HH^\bullet_\m(k(r))] + \sum_{s \in \ZZ} \mu_s [\HH^\bullet_\m(k[x](s))].
\]
We will reach a contradiction by specializing these equality of $\ZZ \times 2$ tables to specific entries. In fact, the entry $(-a,1)$ on the left is $h^0(k(a))_{-a} = 1$, while every table on the right has a zero entry in that position. 
\item Now assume there exist $\lambda_r,\mu_s \in \QQ_{\geq 0}$ such that
\[
\ds [\HH^\bullet_\m(k[x](a))] = \sum_{r \in \ZZ} \lambda_r [\HH^\bullet_\m(k(r))] + \sum_{s \ne a} \mu_s [\HH^\bullet_\m(k[x](s))].
\]
Since the table on the left has all zeros in the first column, we readily get that $\lambda_r=0$ for all $r$. Moreover, since the $(-a,2)$ entry on the left is $h^1(k[x](a))_{-a} = 0$, we obtain that $\mu_s=0$ for all $s<a$. However, specializing at $(-a-1,2)$, on the left we have $h^1(k[x](a))_{-a-1} = 1$, while all the tables on the right have a zero entry in that position. A contradiction.
\end{enumerate}
\end{proof}

\section{Decomposition of graded local cohomology tables in dimension two}
\label{Section dim2}
In this section, $R$ will denote a polynomial ring $k[x,y]$ over an infinite field $k$. Given any finitely generated $R$-module $M$, we have $\HH^i_\m(M)=0$ for all $i \leq -1$ and all $i \geq 3$. Therefore the local cohomology table $\HH^i_\m(M)$ can be encoded into a $\ZZ \times 3$ matrix $[\HH^\bullet_\m(M)]$, with non-negative integer entries.  

\begin{notation}
Let $N = \bigoplus_{n\in \ZZ}N_n$ be a $\ZZ$-graded $k$-vector space that satisfies $\dim_k(N_n) < \infty$ for all $n \in \ZZ$. For $t \geq 0$ we define a ``$t$-difference function'' $\Delta^t_N\colon \ZZ \to \ZZ$ inductively. If $t=0$ then $\Delta^0_N(n) = \dim_k(N_n)$ for all $n \in \ZZ$. If $t>0$, for $n\in \ZZ$ we define $\Delta^t_N(n) = \Delta^{t-1}_N(n) - \Delta^{t-1}_N(n+1)$.
\end{notation}

\begin{proposition} \label{proposition differences} Let $R=k[x,y]$, and $M$ be a finitely generated $R$-module.
\begin{enumerate}
\item There exists an integer $a$ such that $\HH^i_\m(M)_n = 0$ for all $n > a$.
\item For $i=0,2$ we have $\Delta^j_{\HH^i_\m(M)}(n) \geq 0$ for all $n \in \ZZ$ and all $j \leq i$. For $i=1$, we have $\Delta^0_{\HH^1_\m(M)}(n) \geq 0$ for all $n \in \ZZ$, and $\Delta^1_{\HH^1_\m(M)}(n) \geq 0$ for all $n \ll 0$.
\item For every $i=0,1,2$ we have $\Delta^i_{\HH^i_\m(M)}(n)=0$ for all but finitely many $n \in \ZZ$.
\end{enumerate}
\end{proposition}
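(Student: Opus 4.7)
The plan is to exploit graded local duality over $R = k[x,y]$, which gives isomorphisms of graded $R$-modules
\[
\HH^i_\m(M)^\vee \cong \Ext^{2-i}_R(M, R(-2)),
\]
where $(-)^\vee$ denotes the graded Matlis dual. This transfers the study of the Hilbert functions of the Artinian modules $\HH^i_\m(M)$ to the study of the Hilbert functions of finitely generated graded modules. Part (1) is then immediate, since each $\HH^i_\m(M)$ is Artinian and $\ZZ$-graded, hence bounded above in degree: one may take $a = \reg(M)$. The case $i=0$ of both (2) and (3) is clear, since $\Delta^0_{\HH^0_\m(M)}(n) = \dim_k \HH^0_\m(M)_n \geq 0$ and $\HH^0_\m(M)$ is of finite length.

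For $i = 2$, set $N := \Hom_R(M, R(-2))$. The main structural input is that $N$ is a free graded $R$-module. Indeed, $N$ is either zero or torsion-free of full support (as a submodule of a free module via any presentation $R^a \twoheadrightarrow M$), and, as an $R$-dual over a Gorenstein ring, it is reflexive. Over the two-dimensional regular ring $R$, reflexivity implies Serre's condition $S_2$, which forces $\depth N = \dim N = 2$; hence $N$ is Cohen--Macaulay and, by Auslander--Buchsbaum, free. Writing $N \cong \bigoplus_{j=1}^s R(b_j)$ and dualizing, one obtains
\[
\dim_k \HH^2_\m(M)_n \;=\; \sum_{j=1}^s \max(b_j + 1 - n,\, 0),
\]
a finite sum of ``ramp'' functions, each non-negative, non-increasing, and discretely convex in $n$. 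This at once gives $\Delta^j_{\HH^2_\m(M)}(n) \geq 0$ for $j = 0, 1, 2$ and shows that $\Delta^2_{\HH^2_\m(M)}(n) = \#\{j : b_j = n\}$ has finite support, settling (2) and (3) for $i = 2$.

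For $i = 1$, set $N_1 := \Ext^1_R(M, R(-2))$. Localizing at the unique height-zero prime shows $\dim N_1 \leq 1$, since $\Ext^1$ vanishes over the field $R_{(0)}$. Setting $N_1' := N_1/\HH^0_\m(N_1)$, the quotient is either zero or Cohen--Macaulay of dimension one, so its Hilbert function is non-decreasing and stabilizes to the multiplicity $e$ of $N_1'$ for large $n$. Since $\HH^0_\m(N_1)$ has finite graded support, $\dim_k (N_1)_n = e$ for $n \gg 0$, and by Matlis duality $\dim_k \HH^1_\m(M)_n = \dim_k (N_1)_{-n}$ equals $e$ for $n \ll 0$. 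Hence $\Delta^1_{\HH^1_\m(M)}(n) = 0$ (in particular $\geq 0$) for such $n$, and combined with part (1) this shows that $\Delta^1_{\HH^1_\m(M)}$ vanishes outside a finite range, completing (3) for $i = 1$. The main obstacle is the structural step for $i = 2$: the freeness of $\Hom_R(M, R(-2))$ relies on reflexivity of dual modules combined with the Auslander--Buchsbaum formula over a two-dimensional regular ring; these are classical facts that deserve a careful citation.
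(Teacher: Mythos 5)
Your proof is correct and takes the same route as the paper's: graded local duality, reducing the claims to Hilbert-function statements about $\Ext^{2-i}_R(M,R(-2))$. You go further than the paper's one-line sketch (which only notes $\dim \Ext^{2-i}_R(M,R(-2)) \leq i$) by pinning down the key structural input for $i=2$, namely that $\Hom_R(M,R(-2))$ is free over $k[x,y]$; this, or equivalently the observation that $R$-duals are second syzygies and hence have depth $\geq 2$ over a two-dimensional regular ring, is what actually yields the convexity claim, and it is good that you made it explicit.
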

\begin{proof}
This follows from standard results on the growth of Hilbert functions of finitely generated graded modules of a given dimension. Indeed, the graded Matlis dual of $\HH^i_\m(M)$ is $\Ext^{2-i}_R(M,R(-2))$, and the latter is a finitely generated module of dimension at most $i$.
\end{proof}

In analogy with the notation we use for local cohomology modules, given a $\ZZ$-graded $R$-module $L$ we record its Hilbert function $n \mapsto \dim_k(L_n)$ in a column which we denote by $[L]$. To help keeping track of degrees, we will also include the index $n \in \ZZ$ as an extra column. Moreover, we usually represent such columns as rows, by taking the transpose matrix:
\[
\ds [L]^T = \left[ \ \ \begin{matrix} \cdots & n+1 & n & n-1 & \cdots  \\ \hline \cdots & \dim_k(L_{n+1}) & \dim_k(L_n) & \dim_k(L_{n-1}) & \cdots \end{matrix} \ \ \right].
\]
\begin{lemma} \label{decomposition_finite_length} Let $R=k[x,y]$, and $L$ be a graded cyclic $R$-module of finite length. Let $a$ (respectively, $b$) be the smallest (respectively, largest) integer $t$ such that $L_t \ne 0$. Then $[L]= \sum_{n=0}^{b-a} r_n [R/\m^{n+1}(-a)]$, for some $r_n \in \QQ_{\gs 0}$.
\end{lemma}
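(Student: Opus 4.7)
My plan is as follows. First, I would reduce to the case $a=0$ by shifting (the shift by $a$ applies uniformly to the left side and to every $R/\m^{n+1}$ on the right). Since $L$ is cyclic with $L_0\neq 0$ and $L_n=0$ for $n>b$, I can write $L=R/I$ for a graded ideal $I\subseteq R=k[x,y]$ with $I_n=R_n$ for $n>b$. The Hilbert function of $R/\m^{n+1}$ is $(1,2,\ldots,n+1,0,0,\ldots)$, so any decomposition $[L]=\sum_{n\geq 0}r_n[R/\m^{n+1}]$ forces, in each degree $i$, the identity $\dim_k L_i=(i+1)\sum_{n\geq i}r_n$. Solving telescopically,
\[
r_n=\frac{\dim_k L_n}{n+1}-\frac{\dim_k L_{n+1}}{n+2}.
\]
Since $L_n=0$ for $n>b$, this immediately gives $r_n=0$ for $n>b$, so the sum truncates at $n=b$ as required.

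The heart of the proof is to verify $r_n\geq 0$, i.e. $(n+2)\dim_k L_n\geq(n+1)\dim_k L_{n+1}$. Writing $i_n=\dim_k I_n$, this is equivalent to the ``density inequality''
\[
\frac{i_n}{n+1}\leq\frac{i_{n+1}}{n+2}
\]
for any graded ideal $I\subseteq k[x,y]$. To prove this, I would replace $I$ by its generic initial ideal $\gin(I)$, which has the same Hilbert function (here we use that $k$ is infinite, as is standing in this section). The ideal $\gin(I)$ is Borel-fixed, and in two variables every Borel-fixed monomial ideal is a lex-segment: in each degree $n$, the monomials of $\gin(I)_n$ are the top $i_n$ lex monomials $\{x^n,x^{n-1}y,\ldots,x^{n-i_n+1}y^{i_n-1}\}$.

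For such a lex-segment ideal $J$, the ideal condition $J_n\cdot R_1\subseteq J_{n+1}$ applied to $x^{n-i_n+1}y^{i_n-1}\cdot y=x^{n-i_n+1}y^{i_n}$ forces $i_{n+1}\geq i_n+1$ whenever $i_n\leq n$; and $i_n=n+1$ forces $i_{n+1}=n+2$. In the first case, $\tfrac{i_{n+1}}{n+2}\geq\tfrac{i_n+1}{n+2}\geq\tfrac{i_n}{n+1}$, the last inequality being equivalent to $n+1\geq i_n$, which is the hypothesis; in the second case we have equality. This completes the argument.

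The main obstacle, and the only non-formal step, is the density inequality for graded ideals in $k[x,y]$. One could alternatively try to prove it directly using multiplication by a general linear form on $R/I$, but passing to $\gin(I)$ and reducing to a one-line monomial check seems the cleanest route.
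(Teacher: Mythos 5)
Your proof follows the same overall structure as the paper's: reduce to $a=0$, write $L = R/I$, solve telescopically for $r_n = \frac{d_n}{n+1} - \frac{d_{n+1}}{n+2}$ (where $d_n = \dim_k L_n$), and reduce non-negativity of the $r_n$ to the inequality $(n+2)d_n \geq (n+1)d_{n+1}$ on Hilbert functions. The paper obtains this inequality as a citation to \cite{BoijSmith}, whereas you set out to prove it directly by passing to $\gin(I)$ and reducing to a one-line lex-segment computation; that is a reasonable and more self-contained route.

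The gap is in the assertion that in two variables every Borel-fixed monomial ideal is a lex segment. This is true in characteristic zero, where Borel-fixed coincides with strongly stable; it fails in positive characteristic. For example, in characteristic $2$ the ideal $(x^2, y^2) \subseteq k[x,y]$ is invariant under all of $\mathrm{GL}_2$ (since $(ax+by)^2 = a^2x^2 + b^2y^2$), so it is Borel-fixed and equal to its own generic initial ideal, yet $\{x^2,y^2\}$ is not a lex segment in degree $2$. Since the section assumes only that $k$ is infinite, with no hypothesis on $\Char k$, your argument as written does not cover all cases. The repair is easy: replace $\gin(I)$ by the lex ideal with the same Hilbert function as $I$, which exists over every field by Macaulay's theorem; your monomial verification then applies verbatim. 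Alternatively, observe that the set of Hilbert functions of graded quotients of $k[x,y]$ is independent of the characteristic of $k$ (again by Macaulay), so it suffices to verify the density inequality in characteristic zero, where your gin argument is correct.
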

\begin{proof}
Since $L$ is cyclic, we can write $L=(R/I)(a)$ for some $\m$-primary homogeneous ideal $I$. Let $s=b-a$, and $d_n = \dim_k((R/I)_n)$ for all $n \in \ZZ$. By \cite[Theorem 1.1]{BoijSmith} we have that 
\begin{equation}
\label{1}
n d_n \ls (n+1) d_{n-1}
\end{equation}
for all $n \gs a$. Consider the linear system 
\begin{eqnarray*}
\left[
\begin{matrix}
0 & 0 & \ldots & 0 & s+1 \\
0 & 0 & \ldots & s & s \\
\vdots & \vdots & \vdots & \vdots \\
0 & 2 & \ldots & 2 & 2 \\
1 & 1 & \ldots &1 & 1 
\end{matrix}
\right]
\cdot 
\left[
\begin{matrix}
X_0 \\ X_1 \\ \vdots \\ X_{s-1} \\ X_s
\end{matrix}
\right] = \left[
\begin{matrix}
d_s \\ d_{s-1} \\ \vdots \\ d_1 \\ d_0
\end{matrix}
\right]
\end{eqnarray*}
which has a unique solution $(r_0,\ldots,r_s) \in \QQ^{s+1}$. We prove that $r_i  \gs 0$ for all $i$. It is clear that $r_s = d_s/(s+1) > 0$. For $0 \ls i < s$ we have that 
\[
\ds r_{i} = \frac{d_i}{i+1} - \left(\sum_{j=i+1}^s r_j\right) = \frac{d_i}{i+1} - \left(\frac{d_{i+1}}{i+2} - \frac{d_{i+2}}{i+3} \right) - \ldots - \left(\frac{d_{s-1}}{s} - \frac{d_{s}}{s+1}\right) - \frac{d_s}{s+1} = \frac{d_i}{i+1} - \frac{d_{i+1}}{i+2} \gs 0
\]
by (\ref{1}). For all $j =0,\ldots,s$, we then have that
\[
\ds d_j = \sum_{n=j}^{s} r_n(j+1) = \sum_{n=0}^{s} r_n\dim_k((R/\m^{n+1})_j). %=\sum_{n=a}^{b} r_n \dim_k((R/\m^{n+1}(-a))_j),
\]
Taking into account the shift by $a$, we finally obtain that $[L] = \sum_{n=0}^{b- a} r_n[(R/\m^{n+1})(-a)]$, as claimed.
\end{proof}

We recall the following graded versions of Serre's condition $(S_k)$
\begin{definition}
Let $(R,\m)$ be a standard graded $k$-algebra, and $M$ be a finitely generated graded $R$-module. We say that $M$ satisfies Serre's graded condition $(S_k)$ if
\[
\ds \depth(M_\p) \geq \min \{\dim(M_\p), k\}
\]
for all homogeneous ideals $\p \in \Spec(R)$. We say that $M$ satisfies Serre's graded condition $(S_k)$ on the punctured spectrum if the inequality holds for all homogeneous ideals $\p$, with $\p \ne \m$.
\end{definition}
\begin{lemma} \label{Lemma S1} Let $R=k[x_1,\ldots,x_d]$, $\m = (x_1,\ldots,x_d)$ be its irrelevant maximal ideal, and $M$ be a finitely generated $R$-module of dimension $d$. If $M$ satisfies Serre's graded condition $(S_{d-1})$ on the punctured spectrum, then $\HH^i_\m(M)$ has finite length for all $i \ne d$.
\end{lemma}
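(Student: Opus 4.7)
The plan is to show that the hypotheses force $M$ to be locally free on the punctured spectrum, and then invoke graded local duality. The first key observation is that because $\dim M = d = \dim R$ and $R$ is a domain, we must have $\Ann_R(M) = 0$: indeed, $\dim M = \dim R/\Ann_R(M)$, and this equals $d$ only if the minimal primes of $\Ann_R(M)$ include $(0)$, which since $R$ is a domain means $\Ann_R(M) = 0$. Hence $M$ is faithful, and for any homogeneous prime $\p \neq \m$ one has $\Ann_{R_\p}(M_\p) = (\Ann_R M)R_\p = 0$, giving the crucial identification $\dim M_\p = \dim R_\p = \height(\p)$.

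Next, Serre's condition $(S_{d-1})$ at $\p$ gives
\[
\depth M_\p \geq \min(\dim M_\p, d-1) = \min(\height \p,\, d-1) = \height \p = \dim R_\p,
\]
where the last equality uses that $\p \neq \m$ forces $\height \p \leq d-1$. Combined with the automatic inequality $\depth M_\p \leq \dim M_\p = \dim R_\p$, we obtain $\depth M_\p = \dim R_\p$. Since $R_\p$ is regular, $\pd_{R_\p}(M_\p) < \infty$, and the Auslander--Buchsbaum formula yields $\pd_{R_\p}(M_\p) = \depth R_\p - \depth M_\p = 0$. Hence $M_\p$ is free for every homogeneous prime $\p \neq \m$.

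Finally, graded local duality for the polynomial ring $R$ (Gorenstein with canonical module $R(-d)$) gives $\HH^i_\m(M) \cong (\Ext^{d-i}_R(M, R(-d)))^\vee$ as graded modules, where $^\vee$ denotes the graded Matlis dual. The case $i > d$ is trivial. For $i < d$, the graded module $\Ext^{d-i}_R(M,R)$ localizes at any homogeneous $\p \neq \m$ to $\Ext^{d-i}_{R_\p}(M_\p, R_\p) = 0$, since $M_\p$ is free and $d - i \geq 1$. Because the associated primes of a graded module are homogeneous, it follows that $\Ext^{d-i}_R(M,R)$ has support contained in $\{\m\}$ and therefore has finite length; dualizing, so does $\HH^i_\m(M)$.

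The main obstacle is a short but essential reduction: without the observation that $\dim M = d$ together with $R$ being a domain forces $\Ann_R M = 0$, one cannot conclude $\dim M_\p = \dim R_\p$, and then Serre's condition alone does not force local freeness (since $\min(\dim M_\p, d-1)$ could fall strictly below $\dim R_\p$). Once that point is secured, the rest is a straightforward chain of Auslander--Buchsbaum and local duality.
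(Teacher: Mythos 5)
Your proof is correct and follows essentially the same approach as the paper: both reduce via graded local duality to showing $\Ext^{d-i}_R(M,R)_\p = 0$ for homogeneous $\p \neq \m$, and both rest on the observation that $M_\p$ is Cohen--Macaulay with $\dim M_\p = \dim R_\p$. The one genuine point of difference is how that vanishing is extracted: you invoke Auslander--Buchsbaum to get $M_\p$ free and conclude the $\Ext$ vanishing immediately, whereas the paper applies local duality a second time over $R_\p$ to rewrite $\Ext^{d-i}_{R_\p}(M_\p,R_\p)$ as a local cohomology module of $M_\p$ concentrated below top degree. Your route is a bit more elementary, and you also make explicit the step (that $\dim M = d$ over the domain $R$ forces $\Ann_R M = 0$, hence $\dim M_\p = \dim R_\p$) which the paper uses tacitly; both are worth having spelled out.
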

\begin{proof}
Let $i \ne d$. By graded local duality, we have that $\HH^i_\m(M)$ has finite length if and only if $\Ext^{d-i}_R(M,R)$ does. Since $M$ is graded, so is $\Ext^{d-i}_R(M,R)$. In particular, such a module has finite length if and only if $\Ext^{d-i}_R(M,R)_\p = 0$ for all homogeneous primes $\p$, with $\p \ne \m$. Given that $\Ext^{d-i}_R(M,R)_\p \cong \Ext^{d-i}_{R_\p}(M_\p,R_\p)$, by local duality the latter is zero if and only if $\HH^{i+\delta(\p)}_{\p R_\p}(M_\p) = 0$, where $\delta(\p) = \dim(R_\p)-d$. Finally, because $i \ne d$, this local cohomology module over $R_\p$ is zero given that, because of our assumptions, $M_\p$ is Cohen-Macaulay with $\dim(M_\p) = \dim(R_\p)$.
\end{proof}

The following is the main result of this section.
\begin{theorem} \label{THM decomposition}
Let $R=k[x,y]$, $\m=(x,y)$, and $M$ be a finitely generated $\ZZ$-graded $R$-module. Then $[\HH^\bullet_\m(M)]$ can be written as a finite sum with positive rational coefficients of tables of the form $[\HH^\bullet_\m(k(a))]$, $[\HH^\bullet_\m(k[x](a))]$, $[\HH^\bullet_\m(k[x,y](a))]$ and $[\HH^\bullet_\m(\m^t(a))]$, for $a \in \ZZ$. Moreover, the set of such tables is minimal. Thus, the following set provides an answer to Question \ref{question}:
\[
\ds \Lambda_2 = \{[\HH^\bullet(k(a))], [\HH^\bullet_\m(k[x](a))], [\HH^\bullet_\m(k[x,y](a))], [\HH^\bullet_\m(\m^t(a))] \mid t \in \ZZ_{\geq 1}, a\in \ZZ\}.
\]
\end{theorem}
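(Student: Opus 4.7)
The plan is to reduce to the case of a torsion-free module $M$ of dimension two by two preliminary filtrations. By Remark~\ref{depth and dim}, peeling off $\HH^0_\m(M)$ via the sequence $0 \to \HH^0_\m(M) \to M \to M/\HH^0_\m(M) \to 0$ contributes copies of $[\HH^\bullet_\m(k(a))]$, one for each graded piece of $\HH^0_\m(M)$, and lets us replace $M$ by $M/\HH^0_\m(M)$, which has $\HH^0_\m = 0$. Next, let $T:=T(M)$ denote the torsion submodule. Since $\dim M \leq 2$ and $\HH^0_\m(M)=0$, the module $T$ is either zero or Cohen-Macaulay of dimension exactly one (dimension zero would force $T \subseteq \HH^0_\m(M)=0$). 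The long exact sequence of local cohomology for $0 \to T \to M \to M/T \to 0$ collapses (as $\HH^0_\m(T)$, $\HH^0_\m(M/T)$, and $\HH^2_\m(T)$ all vanish) to $[\HH^\bullet_\m(M)] = [\HH^\bullet_\m(T)] + [\HH^\bullet_\m(M/T)]$, and the $T$-table decomposes into $[\HH^\bullet_\m(k[x](a))]$-contributions by Theorem~\ref{THM Dim 1}, after viewing $T$ as a finite module over a one-variable polynomial ring via graded Noether normalization.

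Assume now $M$ is torsion-free with $\dim M = 2$. Since $R=k[x,y]$ is regular of dimension two, the reflexive hull $F:=M^{**}$ is a finitely generated reflexive graded module, hence Cohen-Macaulay and, by Auslander-Buchsbaum, graded free: $F = \bigoplus_a R(-a)^{m_a}$. Because $M$ is torsion-free of the same rank as $F$ and is free at every prime of height at most one, the natural inclusion $M \hookrightarrow F$ is an isomorphism at such primes, so $C := F/M$ has finite length. The long exact sequence of local cohomology attached to $0 \to M \to F \to C \to 0$ yields $\HH^1_\m(M) \cong C$ and $\HH^2_\m(M) \cong \HH^2_\m(F)$.

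The crucial observation for the decomposition is that, for every $a \in \ZZ$ and $t \geq 1$, the tables $[\HH^\bullet_\m(\m^t(-a))]$ and $[\HH^\bullet_\m(R(-a))]$ agree in the $\HH^2$ column, while the former carries the extra summand $[(R/\m^t)(-a)]$ in the $\HH^1$ column. It therefore suffices to find non-negative rationals $d_{a,t}$ satisfying
\[
\sum_{a \in \ZZ,\, t \geq 1} d_{a,t}\, [(R/\m^t)(-a)] = [C] \quad\text{and}\quad \sum_t d_{a,t} \leq m_a \text{ for each } a \in \ZZ.
\]
To produce them, pick a minimal homogeneous generating set $c_1, \ldots, c_N$ of $C$ with non-decreasing degrees $b_1 \leq \cdots \leq b_N$ and form the filtration $C_i := Rc_1 + \cdots + Rc_i$. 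Minimality gives $|\{i : b_i = a\}| = \dim_k (C/\m C)_a$, and the surjection $F/\m F \twoheadrightarrow C/\m C$ yields $\dim_k (C/\m C)_a \leq m_a$. Each cyclic quotient $C_i/C_{i-1}$ has finite length with one-dimensional lowest-degree component in degree $b_i$ (spanned by the image of $c_i$), so Lemma~\ref{decomposition_finite_length} gives $[C_i/C_{i-1}] = \sum_n r_{i,n}[(R/\m^{n+1})(-b_i)]$ with $r_{i,n} \geq 0$ summing to one. Summing and setting $d_{a,t} := \sum_{i :\, b_i = a} r_{i,t-1}$ produces the required decomposition of $[C]$, with the bound $\sum_t d_{a,t} = |\{i : b_i = a\}| \leq m_a$ being immediate. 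Setting $c_a := m_a - \sum_t d_{a,t} \geq 0$ then gives the desired positive rational combination.

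For minimality, mirror the case-by-case argument of Theorem~\ref{THM Dim 1}. Assume $[\HH^\bullet_\m(N)] = \sum_j r_j[\HH^\bullet_\m(N_j)]$ with $N \in \Lambda_2$, distinct $N_j \in \Lambda_2$, and $r_j > 0$. The column-support profile of each member of $\Lambda_2$—the $\HH^0$ column is non-zero only for $k(b)$; the $\HH^1$ column has infinite support only for $k[x](b)$; the $\HH^2$ column is non-zero only for $k[x,y](b)$ and $\m^s(b)$—restricts which $N_j$ may appear, and evaluating at entries just above and below the support boundary of an appropriate column yields a contradiction. The only non-routine case is $N = \m^t(a)$: the $\HH^2$ column forces the usable $N_j$ to concentrate at parameter $a$ with total coefficient one, and comparing the $\HH^1$ values at the two top degrees of the support of $[(R/\m^t)(a)]$ produces the contradiction. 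The main obstacle of the proof is the generation-count bound in the third paragraph, which is what ties the combinatorial decomposition of $\HH^1_\m(M)$ (via Lemma~\ref{decomposition_finite_length}) back to the graded free structure of the reflexive hull $F = M^{**}$.
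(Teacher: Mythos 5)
Your proof is correct, but takes a genuinely different route from the paper. The paper works sheaf-theoretically: it decomposes $\widetilde{M}$ into torsion plus a direct sum of line bundles, forms the composition $M \to \Gamma_*(\widetilde{M}) \to \bigoplus R(-a_i)$, and then handles the finite-length cokernel $C$ by induction on the number of free summands, peeling off one $\m$-primary ideal $I(-a_t) = \ker(R(-a_t)\to C)$ at a time and checking that the tables add. You instead reduce directly to the torsion-free case using the torsion submodule, take $F = M^{**}$ (graded free by Auslander--Buchsbaum over a $2$-dimensional regular ring) in one step, and replace the paper's induction with a single explicit computation: filter $C$ by a minimal homogeneous generating set, apply Lemma~\ref{decomposition_finite_length} to each cyclic quotient, and use the surjection $F/\m F \twoheadrightarrow C/\m C$ to establish the key budget constraint $\sum_t d_{a,t} \leq m_a$. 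That inequality is what the paper's induction is implicitly managing, and making it explicit is a real clarification: it isolates exactly why one does not run out of free summands of $F$ to pair with the cyclic pieces of $C$. What the paper's route buys in exchange is that it never needs to invoke the structure theory of reflexive modules; what yours buys is a more transparent, non-inductive construction of the coefficients and a cleaner source for the free module $F$. Your outline of the minimality argument is somewhat terse but matches the paper's: for $N = \m^t(a)$, the $\HH^2$ column (linear growth in degree forces both $\sum r_j = 1$ and $u_j = a$) pins down the shift, and evaluating $\HH^1$ at degrees $t-1-a$ and $t-a$ gives the contradiction.
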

\begin{proof}

Let $M$ be a finitely generated $R$-module, and consider its local cohomology table $[\HH^\bullet_\m(M)]$. %We denote the $j$-th entry of the $i$-th column by $h^i(M)_j$.

By Remark~\ref{depth and dim} we will assume that $M$ has positive depth. Let $\widetilde{M}$ be the sheaf on $\PP^1$ associated to $M$, so that $\widetilde{M} \cong \mathcal{F} \oplus \OO(-a_1) \oplus \cdots \oplus \OO(-a_t)$, with $a_1,\ldots,a_t \in \ZZ$ and $\mathcal{F}$ the torsion subsheaf of $\widetilde{M}$. Let $\Gamma_*(\widetilde{M}) = \bigoplus_{n \in \ZZ} \HH^0(\PP^1,\widetilde{M}(n))$, and consider the composition 
\[
\xymatrix{
M \ar[r] & \Gamma_*(\widetilde{M}) \ar[r] & \Gamma_*(\OO(-a_1) \oplus \cdots \oplus \OO(-a_t)) = R(-a_1) \oplus \cdots \oplus R(-a_t).
}
\]
We let $N$ be its kernel, and $P$ be its image. Both $N$ and $P$ have positive depth. Since $\dim(N) \leq 1$, this forces $N$ to be Cohen-Macaulay, and the exact sequence $0 \to N \to M \to P \to 0$ gives an exact sequence $0 \to \HH^1_\m(N) \to \HH^1_\m(M) \to \HH^1_\m(P) \to 0$, and $\HH^2_\m(M) \cong \HH^2_\m(P)$. Because $N$ has dimension one, it is finite over a one-dimensional polynomial ring, and it then follows from Theorem \ref{THM Dim 1} that we can decompose its table using elements from $\Lambda_2$. 
Therefore, in order to finish the proof, it suffices to show that we can decompose $[\HH^\bullet_\m(P)]$ using elements from $\Lambda_2$. We have a short exact sequence 
\[
\xymatrix{
0 \ar[r] & P \ar[r] & R(-a_1) \oplus \cdots R(-a_t) \ar[r] & C \ar[r] & 0,
}
\]
where $C$ has finite length. Taking local cohomology gives that $\HH^1_\m(P) \cong C$, and $\HH^2_\m(P) \cong \HH^2_\m(R(-a_1)) \oplus \cdots \oplus \HH^2_\m(R(-a_t))$. We induct on $t \geq 0$. If $t=0$, there is nothing to prove. If $t>0$, then we let $I(-a_t) = \ker(R(-a_t) \to C)$, which is an $\m$-primary ideal. Let $\ov{P} = \coker(I(-a_t) \to P)$ and $\ov{C} = \coker((R/I)(-a_t) \to C)$, so that we have a short exact sequence $0 \to \ov{P}\to  R(-a_1) \oplus \cdots \oplus R(-a_{t-1}) \to \ov{C} \to 0$. By induction, we can decompose $[\HH^\bullet_\m(\ov{P})]$ using tables from $\Lambda_2$. Moreover, it can be checked that $[\HH^\bullet_\m(P)] = [\HH^\bullet_\m(I(-a_t))] + [\HH^\bullet_\m(\ov{P})]$. Therefore, it suffices to decompose $[\HH^\bullet_\m(I)]$, where $I$ is an $\m$-primary ideal. By Lemma \ref{decomposition_finite_length} we can write $[R/I] = \sum_{n=0}^t r_n[R/\m^n]$ for some $r_n \in \QQ_{\geq 0}$, and some integer $t$. Observe that, since $[R/I]_0=1$, we must have $1=\sum_{n=0}^t r_n[R/\m^n]_0 = \sum_{n=0}^t r_n$. Notice that $[\HH^1_\m(I)] = [R/I] = \sum_{n=0}^t r_n [R/\m^n] = \sum_{n=0}^t r_n [\HH^1_\m(\m^n)]$. Moreover, since $\HH^2_\m(I) \cong \HH^2_\m(R) \cong \HH^2_\m(\m^n)$ for all $n$, we have that $[\HH^\bullet_\m(I)] = \sum_{n=0}^t r_n[\HH^\bullet_\m(\m^n)]$.
This concludes the proof that the local cohomology table of every module can be decomposed using tables from the set $\Lambda_2$. It is left to show the minimality of this set.

For tables of the form $[\HH^\bullet_\m(k(a))]$ and $[\HH^\bullet_\m(k[x](a))]$, the strategy is completely identical to that used inside the proof of Theorem \ref{THM Dim 1}. We therefore only focus on the proof for the remaining tables.

Assume that, for $\lambda_r,\mu_s$ and $\tau_{t,u} \in \QQ_{\geq 0}$, one has
\[
\ds [\HH^\bullet_\m(\m^n(a))] = \sum_{r \in \ZZ} \lambda_r[\HH^\bullet_\m(k(r))] + \sum_{s \in \ZZ} \mu_s [\HH^\bullet_\m(k[x](s))] + \sum_{{\tiny \begin{array}{c} u \in \ZZ \\ (t,u) \ne (n,a)\end{array}}}\tau_{t,u} [\HH^\bullet_\m(\m^t(u))].
\]
Here, we allow the exponent in $\m^t$ to be zero, in which case we mean $\m^0:=R$. Since the first column on the left contains all zeros, one readily sees that $\lambda_r=0$ for all $r$. Moreover, $\mu_s=0$ is forced for all $s$, since the table on the left satisfies $h^1(\m^n(a))_p = 0$ for $p \ll 0$. Similar considerations on zeros of the second and third column rule out $[\HH^\bullet_\m(\m^t(u))]$, with $u\ne a$. Finally, since the table on the left has zeros at $h^1(\m^n(a))_p$ for $p \geq n-a$, we have $\tau_{t,a} = 0$ for $t > n$. If $n=0$, we have reached a contradiction, since no tables on the right satisfy these requirements. If $n>0$, what is left is:
\[
\ds [\HH^\bullet_\m(\m^n(a))] = \sum_{0 \leq t < n} \tau_{t,a} [\HH^\bullet_\m(\m^t(a))].
\]
However, the entry $h^1(\m^n(a))_{n-1-a}$ on the left is equal to $n$, while on the right all the tables have zero entries. A contradiction, which concludes the proof.
\end{proof}
\begin{remark}\label{top generators}
The proof of the theorem shows that if $F$ is a graded free $R$-module such that $\HH_\m^2 (F) \cong \HH_\m^2(M)$,
then we have a surjection $F \to \HH^1_\m (C) \to 0$. 
\end{remark}

\begin{remark}
Alexandra Seceleanu has indicated to us that, quite interestingly, all modules whose local cohomology tables appear in the set $\Lambda_2$ of Theorem \ref{THM decomposition} are actually graded with respect to the fine $\ZZ^2$-grading on $R$. Daniel Erman has pointed out that they in fact satisfy an even stronger condition, as they are ${\rm GL}_2$-equivariant. Assuming Question \ref{question} has positive answer, it would be interesting to determine whether this is the case even in higher dimension.
\end{remark}

We conclude the section with an example that shows that the coefficients appearing in a decomposition may not be integers, as opposed to the case of finitely generated modules over $k[x]$. Moreover, such a decomposition may not be unique. The reason is that the cone of local cohomology tables is not simplicial, since the vectors defined by elements of $\Lambda_2$ are not linearly independent.

\begin{example} Let $M = (x^2,y^2)$. Given that $\HH^0_\m(M) = 0$, and using the isomorphisms $\HH^1_\m(M) \cong \HH^0_\m(R/(x^2,y^2))$ and $\HH^2_\m (M) \cong \HH^2_\m (R)$, one can verify that the transpose of the local cohomology table of $M$ is
\begin{align*}
[\HH^\bullet_\m(M)]^T &= 
\begin{array}{|c||c|c|c|c|c|c|c|c|c|c|}
\hline
n & \cdots  & \cdots & 2 & 1 & 0 & -1 & -2 & -3 & \cdots & \cdots \\
\hline
h^0(M)_n &  0 & \cdots &0& 0 & 0 & 0 & 0 &0& \cdots  & 0 \\
\hline
h^1(M)_n & 0 & \cdots &1& 2 & 1 & 0 & 0 & 0 & \cdots  & 0  \\
\hline
h^2(M)_n &0 & \cdots &  0 & 0 & 0 & 0 & 1 & 2  & \cdots &-n-1 \\
\hline
\end{array}\\
&= \frac{2}{3} \  \begin{array}{|c||c|c|c|c|c|c|c|c|c|}
\hline
n & \cdots & \cdots & 2 & 1 & 0 & -1 & -2 & \cdots &\cdots \\
\hline
h^0(M)_n & 0 & \cdots &0& 0 & 0 & 0 & 0 & \cdots &0 \\
\hline
h^1(M)_n & 0 & \cdots  &0& 2 & 1 & 0 & 0 & \cdots & 0 \\
\hline
h^2(M)_n &0 & \cdots & 0 & 0 & 0 & 0 & 1 & \cdots & -n-1 \\
\hline
\end{array} \\
& + 
\frac{1}{3} \ 
\begin{array}{|c||c|c|c|c|c|c|c|c|c|}
\hline
n & \cdots & \cdots & 2 & 1 & 0 & -1 & -2  & \cdots & \cdots \\
\hline
h^0(M)_n & 0&\cdots  &0& 0 & 0 & 0 & 0 &\cdots &0\\
\hline
h^1(M)_n & 0 & \cdots  &3& 2 & 1 & 0 & 0 &\cdots & 0  \\
\hline
h^2(M)_n &0 & \cdots  & 0 & 0 & 0 & 0 & 1 &\cdots & -n-1 \\
\hline
\end{array},
\end{align*}
so $[\HH^\bullet_\m(M)] = \frac{2}{3} [\HH^\bullet_\m(\m^2)] + \frac{1}{3} [\HH^\bullet_\m(\m^3)]$. Using the same module $M$, it is then easy to see that the transpose of the local cohomology table of $M \oplus R(-2)$ is
\[
[\HH^\bullet_\m(M \oplus R(-2))]^T = 
\begin{array}{|c||c|c|c|c|c|c|c|c|c|c|}
\hline
n & \cdots & \cdots &\cdots 2 & 1 & 0 & -1 & -2 & -3 & \cdots &\cdots \\
\hline
h^0_n & 0 &\cdots &0& 0 & 0 & 0 & 0 &0&\cdots &0  \\
\hline
h^1_n & 0 &\cdots  &1& 2 & 1 & 0 & 0 & 0 & \cdots & 0  \\
\hline
h^2_n &0  &\cdots & 0 & 0 & 1 & 2 & 4 & 6 & \cdots &-2n \\
\hline
\end{array},
\]
This table can then be decomposed in at least two ways: 
\[
\ds \frac{2}{3} [\HH^\bullet_\m(\m^2)] + \frac{1}{3} [\HH^\bullet_\m(\m^3)] + [\HH^\bullet_\m(R(-2))] = [\HH^\bullet_\m(M \oplus R(-2))] = [\HH^\bullet_\m(\m^2)] + [\HH^\bullet_\m(\m (-2))].
\]
\end{example}

\section{An algorithm for the decomposition of local cohomology tables in $k[x,y]$} \label{Section Algorithm}
Let $R=k[x,y]$, where $k$ is a field. We now describe a greedy algorithm that, given the local cohomology table of a finitely generated graded $R$-module, shows how to express it in terms of tables from the set $\Lambda_2$ described in Theorem \ref{THM decomposition}.

Let $L$ be a cyclic graded $R$-module of finite length. Recall that we are denoting by $[L]$ its Hilbert function, that we view as a column, where the row $n$ records the value $\dim_k(L_n)$. Let $a$ (respectively, $b$) be the smallest (respectively, largest) $n \in \ZZ$ such that $L_n \ne 0$. By Lemma \ref{decomposition_finite_length} we can write $[L] = \sum_{n=0}^{b-a} r_n[R/\m^{n+1}(-a)]$, for some $r_n \in \QQ_{\geq 0}$. We now turn the proof of Lemma \ref{decomposition_finite_length} into an explicit algorithm.

\begin{algorithm} \label{algo_finite_length}
Let $H = (h_n)$ be a $\ZZ \times 1$ matrix with non-negative rational entries. Assume that $H$ satisfies the following conditions, that we temporarily denote with $(*_a^b)$:
\begin{enumerate}
\item $h_n=0$ if and only if $n<a$ or $n>b$
\item $h_n \leq \left\{\begin{array}{ll} (n-a+1)h_a & \mbox{ if } h_{n-1}=(n-a)h_a \\ h_{n-1}h_a & \mbox{ if } h_{n-1} < (n-a)h_a\end{array} \right.$
\end{enumerate}

We describe an algorithm to write $H$ as a linear combination with non-negative rational coefficients of $[R/\m^{n+1}(-a)]$, with $0 \leq n \leq b-a$.

We proceed as follows:
\begin{itemize}
\item[\bf Step 1:] Let $r_b = \ds\frac{h_b}{b}$
\item[\bf Step 2:] Let $K=(k_n)_{n \in \ZZ}$ be the column that satisfies 
\[
\ds k_n = \left\{ \begin{array}{ll} n-a+1 & \mbox{ if }a \leq n \leq b \\
 0 & \mbox{ otherwise}\end{array} \right.
 \]
Observe that this is just $[R/\m^{b-a+1}(-a)]$. We replace $H$ by $H'=H- r_bK$.
\end{itemize}
If $H'=0$, we just write $H = r_bK = r_b[R/\m^{b-a+1}(-a)]$, and we STOP. If $H' = (h'_n)_{n \in \ZZ}$ is not the zero column, we observe that $h'_n = 0$ if and only if $n<a$ or $n>b'$, for some $0 \leq b' < b$. It takes a tedious but straightforward computation to show that $H'$ still has non-negative entries, and it satisfies $(*_a^{b'})$. We now repeat Steps 1 and 2 with $H'$, and continue until we STOP. The process clearly terminates, since every time we have a table whose number of non-zero entries decreases at least by one.
\end{algorithm}

\begin{remark}
The condition $(*_a^b)$ in Algorithm \ref{algo_finite_length} is just a restatement of Macaulay's Theorem, which characterizes the possible Hilbert functions of standard graded $k$-algebras, adapted to our setup. In particular, any cyclic $R$-module of finite length satisfies $(*_a^b)$ for some $a,b$ (see Proposition \ref{admissible HF modules}).
\end{remark}

\begin{notation} \label{notation admissible} We call a $\ZZ \times 1$ matrix $H$ that satisfies the conditions $(*_a^b)$ of Algorithm \ref{algo_finite_length} and that further satisfies $h_a=1$ and $h_n \in \NN$ for all $n \in \ZZ$ an \emph{admissible column generated in degree $a$}. Note that we do not wish to keep track of $b$ with this terminology.
If a $\ZZ \times 1$ matrix can be written as a sum of $t$ columns, each generated in degree $a_i$, we call it an \emph{admissible column, generated in degrees $a_1,\ldots,a_t$}.  Finally, given a $\ZZ \times 1$ matrix $H$, and integers $a_1,\ldots,a_t$, we set $\widetilde{H}(a_1,\ldots,a_t) = (\widetilde{h}_n)_{n \in \ZZ}$, where $\widetilde{h}_n = h_n - b_n$, and $b_n$ is the cardinality of the set $\{1,\ldots,t \mid a_i = n\}$. We call $\widetilde{H}$ the \emph{truncation of $H$ with respect to the degrees $a_1,\ldots,a_t$}. 
\end{notation}

\begin{remark} Using this new terminology, it follows from Lemma \ref{decomposition_finite_length} (or Algorithm \ref{algo_finite_length}) that every admissible column $H = (h_n)_{n \in \ZZ}$, generated in degree $a$, and such that $h_n = 0$ for $n>b$, can be realized as a sum $\sum_{n=0}^{b-a} r_n [R/\m^{n+1}(-a)]$, with $r_n \in \QQ_{\geq 0}$ and $\sum_{n=0}^{b-a}r_n = 1$.
\end{remark}

Conversely, we observe the following:

\begin{proposition} \label{admissible HF modules}
Let $L$ be a graded $R$-module of finite length, with minimal homogeneous generators of degrees $a_1,\ldots,a_t$. Then its Hilbert function $[L]$ is a finite sum of admissible columns generated in degrees $a_1, \ldots, a_t$.
\end{proposition}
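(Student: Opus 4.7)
I would proceed by induction on the number $t$ of minimal homogeneous generators of $L$. For the base case $t = 1$, $L$ is cyclic, so $L \cong (R/I)(-a_1)$ for some homogeneous $\m$-primary ideal $I$, and in particular $\dim_k L_{a_1} = 1$. Macaulay's theorem on Hilbert functions of standard graded $k$-algebras, specialized to $R = k[x,y]$, translates directly into the numerical conditions $(*_{a_1}^b)$ of Algorithm~\ref{algo_finite_length} (where $b$ is the socle degree of $L$). Hence $[L]$ itself is an admissible column generated in degree $a_1$.

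For the inductive step, fix a homogeneous minimal generating set $u_1, \ldots, u_t$ of $L$ with $\deg u_i = a_i$, and set $L' := R u_t$. Then $L'$ is a cyclic submodule generated in degree $a_t$, so by the base case $[L']$ is an admissible column generated in degree $a_t$. From the short exact sequence
\[
0 \to L' \to L \to L/L' \to 0
\]
we obtain $[L] = [L'] + [L/L']$ by additivity of Hilbert functions, and it remains to apply the inductive hypothesis to $L/L'$.

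The crux is therefore to verify that $L/L'$ has exactly $t - 1$ minimal homogeneous generators, with degrees $a_1, \ldots, a_{t-1}$. This is pure linear algebra in $L/\m L$: one has $(L/L')/\m(L/L') = L/(\m L + L')$, and $(\m L + L')/\m L$ is the one-dimensional $k$-subspace of $L/\m L$ spanned by the class of $u_t$. Consequently the images of $u_1, \ldots, u_{t-1}$ form a $k$-basis of $(L/L')/\m(L/L')$, and hence a minimal homogeneous generating set of $L/L'$ of the prescribed degrees. The inductive hypothesis then expresses $[L/L']$ as a sum of $t-1$ admissible columns generated in degrees $a_1, \ldots, a_{t-1}$, and combining with $[L']$ yields the desired decomposition.

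The only genuinely nontrivial input is the base case, where one must invoke Macaulay's bound to confirm that the Hilbert function of a cyclic finite-length $k[x,y]$-module satisfies the specific inequalities packaged in $(*_a^b)$. Beyond that, the argument is straightforward bookkeeping in $L/\m L$, so I do not expect serious obstacles.
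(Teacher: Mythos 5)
Your proof is correct, and it takes a genuinely different route from the paper. The paper passes to a single initial module: it takes a minimal free presentation $0 \to N \to F = \bigoplus_i R(-a_i) \to L \to 0$, picks a term order $\tau$ on $F$, and observes that $F/\IN_\tau(N)$ has the same Hilbert function as $L$ while decomposing all at once as a direct sum $\bigoplus_i (R/I_j)(-a_j)$ of cyclic modules, to each of which Macaulay's theorem applies. Your argument instead peels off one cyclic submodule at a time: you set $L' = Ru_t$ and reduce to $L/L'$ by induction, with the short exact sequence $0 \to L' \to L \to L/L' \to 0$ supplying the additivity of Hilbert functions. The key verification in your approach, that $(\m L + L')/\m L$ is the line spanned by $\bar u_t$ in $L/\m L$, and hence $L/L'$ is minimally generated by the images of $u_1,\dots,u_{t-1}$ in degrees $a_1,\dots,a_{t-1}$, is the standard graded Nakayama argument and is correct. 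Both proofs ultimately rest on Macaulay's bound for cyclic finite-length $k[x,y]$-modules, which is what makes each piece an admissible column; where they differ is in how they split $L$ into cyclic pieces. Your induction avoids the Gr\"obner degeneration and is somewhat more elementary and self-contained; the paper's one-shot argument is shorter once one accepts the initial-module machinery and makes the direct-sum structure of the associated cyclic pieces visible immediately. Either is a legitimate proof of the proposition.
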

\begin{proof}

Let $0 \to N \to F=R(-a_1) \oplus \cdots \oplus R(-a_t) \to L \to 0$ be a minimal free graded presentation of $L$. Choose any term order $\tau$ on $F$, and consider the initial module $\IN_\tau(N) \subseteq F$. Then $F/\IN_\tau(N)$ has the same Hilbert function as $F/N \cong L$ \cite[Theorem~15.26]{Eisenbud}. Furthermore, $\IN_\tau(N)$ consists of a direct sum of monomial ideals $I_1(-a_1) \oplus I_2(-a_2) \oplus \cdots \oplus I_t(-a_t) \subseteq F$, so that
\[
\ds F/\IN_\tau(N)  \cong (R/I_1)(-a_1) \oplus \cdots \oplus (R/I_t)(-a_t),
\]
By Macaulay's Theorem, the Hilbert function of each $R/I_j(-a_j)$ is an admissible column generated in degree $a_j$, and the proposition now follows. 
\end{proof}

We now present a series of technical lemmas regarding properties of admissible columns. These will be used in the proof of the algorithm for the decomposition. In what follows, given two columns $K=(k_n)_{n\in \ZZ}$ and $H=(h_n)_{n \in \ZZ}$, we will write $K \leq H$ if $k_n \leq h_n$ for all $n \in \ZZ$. 

\begin{lemma} \label{lemma subtraction one column}
Let $U=(u_n)_{n \in \ZZ}$ be an admissible column, generated in degree $a$, and with $u_n = 0$ for $n > b$. Let $V=(v_n)_{n \in \ZZ}$ be any column with non-negative entries such that for some integer $a' \geq a$
the following conditions hold:
\begin{enumerate}
\item $v_n = 0$ for $n < a'$ and $n > b$,
\item for all $a' \leq n \leq b$ we have $v_n \leq n-a+1$  (This condition is automatic if $V\leq L$, for some admissible column $L$ generated in degree $a$.),
\item for all $a' \leq n \leq b$ we have $v_{n}>v_{n-1}$.
\end{enumerate}
Then $W = (w_n)_{n\in \ZZ}$, defined as $w_n = \max\{0,u_n-v_n\}$, is an admissible column, and $W$ is still generated in degree $a$ if $a'>a$. Moreover, the column $Z=(z_n)_{n \in \ZZ}$ defined as $z_n = \max\{0,v_n-u_n\}$, is either zero or it satisfies $z_n > z_{n-1}$ for all $a'' \leq n \leq b$, for some $a'' \geq a'$.
\end{lemma}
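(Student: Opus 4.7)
The plan is to exploit two complementary features of the hypotheses: admissibility of $U$ forces $u_n - u_{n-1} \leq 1$, with equality only when $u_{n-1} = n-a$ (the ``tight'' case); while condition (3) forces $v_n - v_{n-1} \geq 1$ on $[a', b]$, and condition (2) caps $v_n$ by $n-a+1$. These inequalities will propagate cleanly through the $\max$ operations defining $W$ and $Z$.

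My first step is to pin down the supports of $W$ and $Z$. The key monotonicity claim is: \emph{if $u_{n-1} \leq v_{n-1}$ and $a' \leq n \leq b$, then $u_n \leq v_n$}. Indeed, $u_n \leq u_{n-1} + 1 \leq v_{n-1} + 1 \leq v_n$, using admissibility of $U$ and condition (3). By contraposition, once $u_n - v_n$ drops to zero or below on $[a',b]$, it stays there, so $W$ has contiguous support starting in degree $a$ when $a' > a$; this is precisely where the hypothesis $a' > a$ enters, guaranteeing $w_a = u_a = 1$.

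For the admissibility of $W$ I would split by region. On $n < a'$ we have $w_n = u_n$, so admissibility is inherited from $U$. For $n \geq a'$ with $w_n > 0$, the contrapositive of the monotonicity claim above forces $w_{n-1} > 0$, and then
\[
w_n - w_{n-1} = (u_n - u_{n-1}) - (v_n - v_{n-1}) \leq 1 - 1 = 0,
\]
so $w_n \leq w_{n-1}$, which is strictly stronger than the admissibility growth inequality. The only case needing a little care is the transition at $n = a'$, where $w_{a'-1} = u_{a'-1}$ and $w_{a'} \leq u_{a'} - 1$; splitting on whether $U$ is tight at $a'-1$ verifies the admissibility inequality directly.

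For $Z$, the crucial observation is that $z_n > 0$ forces $u_n < v_n \leq n-a+1$, hence $u_n \leq n-a$; thus $U$ is \emph{not} tight at $n$, and admissibility of $U$ forces $u_n \leq u_{n-1}$. Combined with $v_n - v_{n-1} \geq 1$ this yields
\[
z_n - z_{n-1} \geq (v_n - v_{n-1}) - (u_n - u_{n-1}) \geq 1 - 0 = 1
\]
whenever $z_n > 0$ and $z_{n-1} = v_{n-1} - u_{n-1}$; the case $z_{n-1} = 0$ is immediate since $z_n > 0$ directly. Taking $a''$ to be the smallest index with $z_{a''} > 0$ (necessarily $\geq a'$ since $v$ vanishes below $a'$), the same monotonicity argument used for $W$ shows that $z$ stays positive on $[a'', b]$, so $z_n > z_{n-1}$ holds on the whole interval. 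The main obstacle I anticipate is keeping the boundary case $n = a'$ clean when verifying admissibility of $W$, and handling the degenerate situation $a' = a$ transparently (where condition (2) combined with strict monotonicity forces $v_n = n-a+1$ on $[a,b]$, hence $W \equiv 0$ and $Z = V - U$, which is still covered by the same $Z$-argument).
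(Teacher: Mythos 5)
Your proof is correct and essentially mirrors the paper's: both arguments exploit that admissibility of $U$ bounds $u_n - u_{n-1} \leq 1$ while condition (3) forces $v_n - v_{n-1} \geq 1$, so that on its support $W$ is non-increasing and $Z$ is strictly increasing. The only cosmetic difference is that you take $a''$ to be the smallest index with $z_{a''} > 0$, whereas the paper defines $a''$ as the smallest $j$ with $u_j < j - a + 1$; your choice is slightly cleaner, since the paper's $a''$ can a priori sit below the point where $Z$ actually becomes positive, so the strict inequality $z_n > z_{n-1}$ need not hold right at that $a''$.
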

\begin{proof} For the first claim, the only values we need to check for $w_n$ are those corresponding to $n$ between $a$ and $b$, since $w_n=0$ otherwise. For $a \leq n <a'$ we have $w_n = u_n$, so $w_n$ is admissible. For $a'\leq n \leq b$, if $w_n=0$ there is nothing to show. Otherwise, since $v_n > v_{n-1}$ we have $w_n = u_n - v_n \leq u_n-v_{n-1}-1$. Also, note that $u_n \leq u_{n-1}+1$ always holds. Therefore $w_n \leq u_{n-1}-v_{n-1} \leq w_{n-1}$, and thus it is admissible. If $a'>a$, then $w_n = u_n =1$, so that $W$ is generated in degree $a$.

Now, consider the column $Z$. If $Z \ne 0$, then let $n$ be an integer, with $a' \leq n \leq b$. If $u_n = n-a+1$, then since $v_n \leq n-a+1$ we must have $z_n=0$. On the other hand, if $u_{j} < j-a+1$ for some $j$, then $u_{n+1} \leq u_n$ for all $n \geq j$. If $a''$ is the smallest such value of $j$, we then have $z_{n+1} \geq v_n+1-u_n > z_n$ for all $a'' \leq n \leq b$.
\end{proof}

\begin{definition}
Given a $\ZZ \times 1$ matrix $T = (t_n)_{n \in \ZZ}$, we say that $T$ is a monotone column if $\Delta^1_{T}(n) \geq 0$ for all $n \in \ZZ$.
\end{definition} 
\begin{lemma} \label{lemma_maximal}
Let $H = (h_n)_{n \in \ZZ}$ be an admissible column generated in degrees $a_1,\ldots,a_t$. Assume that $a_1 \leq a_2 \leq \ldots \leq a_t$. Let $T = (t_n)_{n \in \ZZ}$ be a monotone column, and let $P=T+H$. Then $P$ can be written as $U + \sum_{i=1}^t K_i$, where:
\begin{itemize}
\item Each $K_i$ is an admissible column, still generated in degree $a_i$.
\item $U$ is a monotone column, with $U \leq T$.
\item $K_t$ is the maximal admissible column generated in degree $a_t$ satisfying $K_t \leq P$.
\end{itemize}
\end{lemma}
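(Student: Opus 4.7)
The plan is to first construct $K_t$ greedily and then distribute the excess $V := K_t - H_t$ through the remaining admissible columns $H_{t-1}, \ldots, H_1$ and, if necessary, into $T$ by iteratively invoking Lemma~\ref{lemma subtraction one column}. For the first step, set $k_{t,n} = 0$ for $n < a_t$, $k_{t, a_t} = 1$, and for each $n > a_t$ let $k_{t,n} = \min(p_n, M_n)$, where $M_n = n - a_t + 1$ when $k_{t, n-1} = n - a_t$ (the Macaulay-maximal regime) and $M_n = k_{t, n-1}$ otherwise; stop as soon as this minimum reaches zero. By construction $K_t$ is admissible in degree $a_t$ with $K_t \leq P$, and a direct induction on $n$, comparing to any other admissible $K' \leq P$ of degree $a_t$ and using the two-case condition $(*_a^b)$, shows that $K_t$ is pointwise maximal among such columns.

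Next, choose any decomposition $H = H_1 + \cdots + H_t$ with $H_i$ admissible in degree $a_i$. Since $H_t \leq P$ is itself a candidate admissible column in degree $a_t$, maximality of $K_t$ gives $H_t \leq K_t$ pointwise, so $V \geq 0$ and $V \leq P - H_t = T + \sum_{i<t} H_i$. Thus
\[
P - K_t = T + \sum_{i<t} H_i - V,
\]
and the problem reduces to expressing the right-hand side in the form $U + \sum_{i<t} K_i$ with $U \leq T$ monotone and each $K_i$ admissible in degree $a_i$. I would do this iteratively: starting from the residual $V^{(t-1)} := V$, at stage $i$ (for $i = t-1, t-2, \ldots, 1$) set $K_i := \max(0, H_i - V^{(i)})$ and carry forward $V^{(i-1)} := \max(0, V^{(i)} - H_i)$. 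Lemma~\ref{lemma subtraction one column} certifies that $K_i$ is admissible in degree $a_i$ and that $V^{(i-1)}$ is either zero or strictly increasing on its effective support, so its hypotheses reproduce at the next stage. Any surviving residual $V^{(0)}$ is subtracted from $T$ to form $U$; monotonicity of $U$ follows from the calculation $(t_n - v^{(0)}_n) - (t_{n+1} - v^{(0)}_{n+1}) = (t_n - t_{n+1}) + (v^{(0)}_{n+1} - v^{(0)}_n) \geq 0$, where the first summand is non-negative because $T$ is monotone and the second is positive on the support of $V^{(0)}$ by strict increase.

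The main obstacle, and where most of the work lies, is verifying that the initial $V = K_t - H_t$ satisfies the strict-increase hypothesis of Lemma~\ref{lemma subtraction one column}, together with the upper bound relative to $a_{t-1}$. Here maximality of $K_t$ is essential: at any degree $n$ in the support of $V$, the greedy rule forces the growth of $K_t$ from $n-1$ to $n$ to be as large as $P$ allows, while $H_t$ --- being strictly smaller at $n-1$ --- is already in the Macaulay-dropped regime and grows by at most zero. A careful case analysis on the growth modes of $K_t$ and $H_t$ is required; in degrees where both columns are forced to plateau simultaneously (for example, when $p_n$ caps $K_t$ at the value $k_{t, n-1}$), one must split $V$ into a sum of strictly increasing sub-columns and process them one at a time, which is the technically delicate portion of the argument. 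Once this is in place, the iterative subtraction sketched above, together with the bookkeeping ensured by Lemma~\ref{lemma subtraction one column}, produces the desired decomposition.
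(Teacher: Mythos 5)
There is a genuine gap, and it sits exactly where you flag it — but the missing idea is not a bookkeeping device (``split $V$ into strictly increasing sub-columns''), it is a structural claim about $K_t$ that the paper isolates and proves first. The paper shows (its Claim in the proof of Lemma~\ref{lemma_maximal}) that if $c$ is the first index with $k_{t,c}\le k_{t,c-1}$, then in fact $k_{t,n}=p_n$ for all $n\ge c$. This uses the specific structure $P=T+\sum H_i$: once $K_t$ falls off the Macaulay-maximal ramp at $c$, each $H_i$ is below its own ramp at $c$ and hence non-increasing for $n\ge c$, and $T$ is non-increasing; so $P$ is non-increasing for $n\ge c$ and $\min(p_n,n-a_t+1)=p_n$ there. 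This observation is what lets one truncate $T$ and every $H_i$ at $c$ (to $T'$ and $H_i'$) and work entirely in the window $[a_t,c)$, where $K_t$ increases by exactly $1$ at each step.

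Your $V=K_t-H_t$ does not have the shape Lemma~\ref{lemma subtraction one column} needs and cannot be made to have it by splitting. For $n\ge c$ you have $V_n=p_n-h_{t,n}=t_n+\sum_{i<t}h_{i,n}$, a non-increasing tail that is typically nonzero on a long range; condition (1) of Lemma~\ref{lemma subtraction one column} (that $V$ vanish below $a'$ and above $b$) and condition (3) (strict increase on $[a',b]$) both fail on this tail, and condition (2) (the bound $v_n\le n-a_{t-1}+1$) can fail there as well, since nothing constrains $t_n+\sum_{i<t}h_{i,n}$ by the ramp for $a_{t-1}$. Decomposing a non-increasing tail into strictly increasing pieces is not possible, so the proposed workaround does not repair the argument. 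The paper avoids this entirely by disposing of everything above $c$ via the Claim: for $n\ge c$ one has $p_n=k_{t,n}$, so there is nothing left to redistribute, and the iterative applications of Lemma~\ref{lemma subtraction one column} are run only on $[a_t,c)$. Even the order of inputs to the lemma matters: the paper applies it with $U=H_t'$ and $V=K_t|_{[a_t,c-1]}$ (which is strictly increasing there by design), and reads off the strict increase of the residual $Z_t$ from the lemma's \emph{conclusion} — it never needs $K_t-H_t$ to be strictly increasing as a hypothesis. Without the Claim and the truncation at $c$, the chain of subtractions you set up does not get started.
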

\begin{proof}
We let $K = (k_n)_{n \in \ZZ}$ be the largest admissible column generated in degree $a_t$, satisfying $K \leq P$. In other words, if $P=(p_n)_{n \in \ZZ}$, we have $k_n = \min\{p_n,n-a_t+1\}$ for all $n \geq a_t$, and $k_n=0$ otherwise.
\begin{claim} \label{claim1}
If we let $c=\min\{n \in \ZZ \mid n \geq a_t, k_n \leq k_{n-1}\}$, then $k_n=p_n$ for all $n \geq c$.
\end{claim}
\begin{proof}[Proof of the Claim] Observe that $1=k_{a_t} > k_{a_t-1} = 0$, therefore $c > a_t$. Moreover, by maximality of $K$, if $k_n > k_{n-1}$, we also have $k_{n+1} > k_n$, as long as $k_n+1 \leq p_{n+1}$. Therefore, since $k_{c-1} > k_{c-2}$ but $k_c \leq k_{c-1}$, we must have $k_{c-1}+1>p_c$. In particular, by maximality we have $k_c = p_c$. Now we recall that $H=T+H_1 + \cdots + H_t$, where each $H_i$ is admissible, generated in degree $a_i$, and $T$ is monotone. For $i=1,\ldots,t$, if we set $H_i = (h_{i,n})_{n \in \ZZ}$, we then have $p_n = t_n+\sum_i h_{i,n}$ for all $n \in \ZZ$. Observe that, for all $i$, we have $h_{i,c} \leq p_c=k_c \leq k_{c-1} \leq c-a_t \leq c-a_i < c-a_i+1$. In particular, for each $H_i$ to be admissible, we must have $h_{i,n+1} \leq h_{i,n}$ for all $n \geq c$. The same type of inequality holds for $T$, just because it is a monotone column: $t_{n+1}\leq t_n$ for all $n \in \ZZ$ and, in particular, for $n \geq c$. It follows that $p_{n+1} \leq p_n$ for all $n \geq c$, and by maximality of $K$ we then have $k_n = p_n$ for all $n \geq c$. This proves the claim.
\end{proof}
For $c$ as in Claim \ref{claim1}, and all $i=1,\ldots,t$, define $H'_{i} = (h'_{i,n})_{n \in \ZZ}$ as follows: $h'_{i,n} = h_{i,n}$ for all $n <c$, and $h'_{i,n}=0$ for all $n \geq c$. Observe that all the columns $H_i'$ are still admissible, generated in degree $a_i$. Similarly, we define $T'=(t'_n)_{n \in \ZZ}$ as follows: $t'_n=t_n$ for $n <c$, and $t_n=0$ for $n \geq c$. Observe that $T'$ is still monotone, with $T' \leq T$.

Now, we observe that $K \geq H_t$, by maximality of $K$. We define $Z_t=(z_{t,n})_{n\in \ZZ}$ as $z_{t,n} = k_n-h_{t,n}$ for $n <c$, and $z_{n,t}=0$ for $n \geq c$. By Claim \ref{claim1}, we have that $k_n>k_{n-1}$ for all $a_t \leq n < c$.  Because of this inequality, and since $K$ is admissible, we can apply Lemma \ref{lemma subtraction one column} with $U=H_t'$ and $V=K$. We then obtain that either $Z_t=0$, or $z_{t,n} > z_{t,n-1}$ for all $b_t \leq n < c$, for some $b_t>a_t$, and $z_{t,n} =0$. In case $Z_t=0$, we then have that $p_n = t_n+h_{1,n} + \cdots + h_{t-1,n} + k_n$ for all $n <c$, and $p_n = k_n$ for $n \geq c$. Thus:
\[
\ds P=T'+H_1'+ \cdots + H_{t-1}' + K
\]
is the desired decomposition, setting $U=T'$, $K_i = H_i'$ for all $i=1,\ldots,t-1$, and $K_t=K$. If $Z_t \ne 0$, observe that $z_{t,n}$ is either zero, or it satisfies $z_{t,n} \leq k_n \leq n-a_t+1$, Moreover, since $z_{t,n} > z_{t,n-1}$ for $b_t \leq n < c$, we can apply Lemma \ref{lemma subtraction one column} applied to $U=H'_{t-1}$ and $V=Z_t$. We then get that $W_{t-1}=(w_{t-1,n})_{n\in \ZZ}$, defined as $w_{t-1,n} = \max\{0,h'_{t-1,n}-z_{t,n}\}$, is admissible, generated in degree $a_{t-1}$. Moreover, $Z_{t-1}=(z_{t-1,n})_{n \in \ZZ}$, defined as $z_{t-1,n}  =\max\{0,z_{t,n}-h_{t-1,n}\}$ is either zero, or it satisfies $z_{t-1,n}>z_{t-1,n-1}$ for $b_{t-1} \leq n <c$, for some $b_{t-1} \geq b_t$. In case $Z_{t-1}=0$, we have
\[
\ds P = T'+H'_1 + \cdots + H'_{t-2} + W_{t-1} + K,
\]
using the fact that for $n <c$ one has $p_n=t_n+h_{1,n} + \cdots + h_{t-2,n} + w_{t-1,n} + k_n  = h_{t-1,n} + h_{t,n}$, while for $n \geq c$ one has $p_n = k_n$. In this case, we can set $U=T'$, $K_i = H_i'$ for $i=1,\ldots,t_2$, $K_{t-1}=W_{t-1}$, $K_t=K$ and we have the desired decomposition. If $Z_{t-1} \ne 0$, observe that $z_{t-1,n}$ is either zero, or $z_{t-1,n} \leq k_n \leq n-a_t+1$; moreover, $z_{t-1,n} > z_{t-1,n-1}$ for all $b_{t-1} \leq n < c$. We can apply again Lemma \ref{lemma subtraction one column} to $U=H'_{t-2}$ and $V=Z_{t-1}$ to obtain a column $W_{t-2}$ that is admissible, generated in degree $a_{t-2}$, and a column $Z_{t-2} = (z_{t-2,n})_{n \in \ZZ}$ defined as $z_{t-2,n} = \max\{0,z_{t-1,n}-h_{t-2,n}\}$. As before, we have that $Z_{t-2}$ is either zero, or it satisfies $z_{t-2,n}>z_{t-2,n-1}$ for all $b_{t-2} \leq n < c$, with $b_{t-2} \geq b_{t-1}$. In the first case, similar to the case above, we now have
\[
\ds P=T'+H_1' + \ldots  + H'_{t-3} + W_{t-2} + W_{t-1} + K,
\]
and we can set $U=T'$, $K_i = H_i'$ for $i=1,\ldots,t_3$, $K_{i}=W_{i}$ for $i=t-2,t-1$, and $K_t=K$. Repeating this way, we either eventually get $Z_j=0$ for some $j$, in which case
\[
\ds P=T'+H_1'+ \cdots + H_{j-1}' + W_j + \cdots + W_{t-1} + K.
\]
We can then set $U=T'$, $K_i = H_i'$ for $i=1,\ldots,j-1$, $K_{i}=W_{i}$ for $i=j,\ldots,t-1$, and $K_t=K$. Otherwise, we have constructed admissible columns $W_1,W_2,\ldots,W_{t-1}$, generated in degrees $a_1,\ldots,a_{t-1}$, and we have a column $Z_1 = (z_{1,n})_{n \in \ZZ}$ that satisfies $z_{1,n} > z_{1,n-1}$ for $b_1 \leq n < c$, and $Z_1 \leq T'$ by construction, since we started with $K \leq P$. We observe that $U=T'-Z_1$ is still monotone since $z_{1,n}>z_{1,n-1}$ for $b_1 \leq n < c$, and $t'_n=z_{1,n}=0$ for $n\geq c$. Moreover, we have $U \leq T' \leq T$. Choosing $K_i = W_i$ for all $i=1,\ldots,t-1$ and $K_t=K$, we finally have $P=U+K_1 + \cdots + K_t$, as desired.
\end{proof}

We would like to stress the fact that one should think of $K_t$ in Lemma \ref{lemma_maximal} as the ``maximal'' admissible column generated in the highest degree $a_t$, that can be subtracted from $P=T+H$.

We illustrate this construction with a concrete example.
\begin{example} Let us represent an admissible column $H = (h_n)$ generated in degree $a$ in the following way: we place a filled star in row $a$, and $h_n$-many empty circles in row $n$, with $n \ne a$. For example, the following drawing below represents the admissible column $A=(a_n)_{n \in \ZZ}$, generated in degree $-2$, with $a_{-1} = 2$, $a_{0} = 3$, $a_1 = 3$, $a_2 = 2$, $a_3=1$, and $a_n = 0$ for $n<-2$ or $n>3$:

\begin{center}
\begin{tikzpicture}
  [decoration={markings,mark=at position 1 with {\arrow{stealth}}},
   blob/.style={circle,minimum width=6pt,inner sep=0pt},
   flow/.style={postaction={decorate}}
  ]
\node () at (-1,0.5) {$\vdots$};
\node () at (-1,0) {$4$};
\node () at (-1,-0.5) {$3$};
\node () at (-1,-1) {$2$};
\node () at (-1,-1.5) {$1$};
\node () at (-1,-2) {$0$};
\node () at (-1,-2.5) {$-1$};
\node () at (-1,-3) {$-2$};
\node () at (-1,-3.5) {$\vdots$};
\node[draw,circle,inner sep=2pt] (1) at (0,-0.5) {};
\node[draw,circle,inner sep=2pt] (1) at (0,-1.5) {};
\node[draw,circle,inner sep=2pt] (1) at (0.5,-1.5) {};
\node[draw,circle,inner sep=2pt] (1) at (0,-1) {};
\node[draw,circle,inner sep=2pt] (1) at (0.5,-1) {};
\node[draw,circle,inner sep=2pt] (1) at (1,-1.5) {};
\node[draw,circle,inner sep=2pt]  at (0,-2) {};
\node[draw,circle,inner sep=2pt]  at (0.5,-2) {};
\node[draw,circle,inner sep=2pt]  at (1,-2) {};
\node[draw,circle,inner sep=2pt] (1) at (0,-2.5) {};
\node[draw,circle,inner sep=2pt] (1) at (0.5,-2.5) {};
\node[draw, fill,star, star points=5, star point ratio=.4, minimum size=2.5pt, inner sep=0pt] (1) at (0,-3) {};
\end{tikzpicture}
\end{center}
Moreover, we are going to represent a monotone column $T = (t_n)_{n \in \ZZ}$ by placing $t_n$ empty circles on line $n$. For example, the following drawing represents the monotone column that satisfies $t_n=3$ for $n \leq -1$, $t_0=2$, $t_n=1$ for $n=1,2,3$, and $t_n=0$ for $n \geq 4$: 

\begin{center}
\begin{tikzpicture}
  [decoration={markings,mark=at position 1 with {\arrow{stealth}}},
   blob/.style={circle,minimum width=6pt,inner sep=0pt},
   flow/.style={postaction={decorate}}
  ]
  \node () at (-2,-1.5) {$T=$};
\node () at (-1,0.5) {$\vdots$};
\node () at (-1,0) {$4$};
\node () at (-1,-0.5) {$3$};
\node () at (-1,-1) {$2$};
\node () at (-1,-1.5) {$1$};
\node () at (-1,-2) {$0$};
\node () at (-1,-2.5) {$-1$};
\node () at (-1,-3) {$-2$};
\node () at (-1,-3.5) {$\vdots$};
\node[draw,circle,inner sep=2pt] (1) at (0,-0.5) {};
\node[draw,circle,inner sep=2pt] (1) at (0,-1) {};
\node[draw,circle,inner sep=2pt] (1) at (0,-1.5) {};
\node[draw,circle,inner sep=2pt]  at (0,-2) {};
\node[draw,circle,inner sep=2pt]  at (0.5,-2) {};
\node[draw,circle,inner sep=2pt] (1) at (0,-2.5) {};
\node[draw,circle,inner sep=2pt] (1) at (0.5,-2.5) {};
\node[draw,circle,inner sep=2pt] (1) at (1,-2.5) {};
\node[draw,circle,inner sep=2pt] (1) at (0,-3) {};
\node[draw,circle,inner sep=2pt] (1) at (0.5,-3) {};
\node[draw,circle,inner sep=2pt] (1) at (1,-3) {};
\end{tikzpicture}
\end{center}
 
Consider the following three admissible columns, generated in degrees $-2, -2$ and $0$ respectively:

\begin{minipage}[h]{0.3\textwidth}
\begin{center}
\begin{tikzpicture}
  [decoration={markings,mark=at position 1 with {\arrow{stealth}}},
   blob/.style={circle,minimum width=6pt,inner sep=0pt},
   flow/.style={postaction={decorate}}
  ]
\node () at (-1,0.5) {$\vdots$};
\node () at (-1,0) {$4$};
\node () at (-1,-0.5) {$3$};
\node () at (-1,-1) {$2$};
\node () at (-1,-1.5) {$1$};
\node () at (-1,-2) {$0$};
\node () at (-1,-2.5) {$-1$};
\node () at (-1,-3) {$-2$};
\node () at (-1,-3.5) {$\vdots$};
\node[draw,circle,inner sep=2pt] (1) at (0,-0.5) {};
\node[draw,circle,inner sep=2pt] (1) at (0,-1.5) {};
\node[draw,circle,inner sep=2pt] (1) at (0.5,-1.5) {};
\node[draw,circle,inner sep=2pt] (1) at (0,-1) {};
\node[draw,circle,inner sep=2pt] (1) at (0.5,-1) {};
\node[draw,circle,inner sep=2pt] (1) at (1,-1.5) {};
\node[draw,circle,inner sep=2pt]  at (0,-2) {};
\node[draw,circle,inner sep=2pt]  at (0.5,-2) {};
\node[draw,circle,inner sep=2pt]  at (1,-2) {};
\node[draw,circle,inner sep=2pt] (1) at (0,-2.5) {};
\node[draw,circle,inner sep=2pt] (1) at (0.5,-2.5) {};
\node[draw, fill,star, star points=5, star point ratio=.4, minimum size=2.5pt, inner sep=0pt] (1) at (0,-3) {};
\end{tikzpicture}
\end{center}
\end{minipage}
\hspace{0.1cm}
\begin{minipage}[h]{0.3\textwidth}
\begin{center}
\begin{tikzpicture}
  [decoration={markings,mark=at position 1 with {\arrow{stealth}}},
   blob/.style={circle,minimum width=6pt,inner sep=0pt},
   flow/.style={postaction={decorate}}
  ]
\node () at (-1,0.5) {$\vdots$};
\node () at (-1,0) {$4$};
\node () at (-1,-0.5) {$3$};
\node () at (-1,-1) {$2$};
\node () at (-1,-1.5) {$1$};
\node () at (-1,-2) {$0$};
\node () at (-1,-2.5) {$-1$};
\node () at (-1,-3) {$-2$};
\node () at (-1,-3.5) {$\vdots$};
\node[draw,circle,inner sep=2pt] (1) at (0,-0.5) {};
\node[draw,circle,inner sep=2pt] (1) at (0,-1.5) {};
\node[draw,circle,inner sep=2pt] (1) at (0,-1) {};
\node[draw,circle,inner sep=2pt]  at (0,-2) {};
\node[draw,circle,inner sep=2pt]  at (0.5,-2) {};
\node[draw,circle,inner sep=2pt] (1) at (0,-2.5) {};
\node[draw,circle,inner sep=2pt] (1) at (0.5,-2.5) {};
\node[draw, fill,star, star points=5, star point ratio=.4, minimum size=2.5pt, inner sep=0pt] (1) at (0,-3) {};
\end{tikzpicture}
\end{center}
\end{minipage}
\hspace{0.1cm}
\begin{minipage}[h]{0.3\textwidth}
\begin{center}
\begin{tikzpicture}
  [decoration={markings,mark=at position 1 with {\arrow{stealth}}},
   blob/.style={circle,minimum width=6pt,inner sep=0pt},
   flow/.style={postaction={decorate}}
  ]
\node () at (-1,0.5) {$\vdots$};
\node () at (-1,0) {$4$};
\node () at (-1,-0.5) {$3$};
\node () at (-1,-1) {$2$};
\node () at (-1,-1.5) {$1$};
\node () at (-1,-2) {$0$};
\node () at (-1,-2.5) {$-1$};
\node () at (-1,-3) {$-2$};
\node () at (-1,-3.5) {$\vdots$};
\node[draw,circle,inner sep=2pt] (1) at (0,-1) {};
\node[draw, fill,star, star points=5, star point ratio=.4, minimum size=2.5pt, inner sep=0pt] (1) at (0,-1.5) {};
\end{tikzpicture}
\end{center}
\end{minipage}

Taking their sum with the monotone column $T$ defined above, we obtain

\begin{center}
\begin{tikzpicture}
  [decoration={markings,mark=at position 1 with {\arrow{stealth}}},
   blob/.style={circle,minimum width=6pt,inner sep=0pt},
   flow/.style={postaction={decorate}}
  ]
  \node () at (-3,-1.5) {$P=T+H=$};
\node () at (-1,0.5) {$\vdots$};
\node () at (-1,0) {$4$};
\node () at (-1,-0.5) {$3$};
\node () at (-1,-1) {$2$};
\node () at (-1,-1.5) {$1$};
\node () at (-1,-2) {$0$};
\node () at (-1,-2.5) {$-1$};
\node () at (-1,-3) {$-2$};
\node () at (-1,-3.5) {$\vdots$};
\node[draw,circle,inner sep=2pt] (1) at (0,-0.5) {};
\node[draw,circle,inner sep=2pt] (1) at (0.5,-0.5) {};
\node[draw,circle,inner sep=2pt] (1) at (1,-0.5) {};
\node[draw, fill,star, star points=5, star point ratio=.4, minimum size=2.5pt, inner sep=0pt] (1) at (2.5,-1.5) {};
\node[draw,circle,inner sep=2pt] (1) at (0,-1) {};
\node[draw,circle,inner sep=2pt] (1) at (0.5,-1) {};
\node[draw,circle,inner sep=2pt] (1) at (1.5,-1) {};
\node[draw,circle,inner sep=2pt] (1) at (2,-1) {};
\node[draw,circle,inner sep=2pt] (1) at (1,-1) {};
\node[draw,circle,inner sep=2pt] (1) at (0,-1.5) {};
\node[draw,circle,inner sep=2pt] (1) at (0.5,-1.5) {};
\node[draw,circle,inner sep=2pt] (1) at (1,-1.5) {};
\node[draw,circle,inner sep=2pt] (1) at (1.5,-1.5) {};
\node[draw,circle,inner sep=2pt] (1) at (2,-1.5) {};
\node[draw,circle,inner sep=2pt] (1) at (0,-2) {};
\node[draw,circle,inner sep=2pt] (1) at (0.5,-2) {};
\node[draw,circle,inner sep=2pt] (1) at (1,-2) {};
\node[draw,circle,inner sep=2pt] (1) at (1.5,-2) {};
\node[draw,circle,inner sep=2pt] (1) at (2.5,-2) {};
\node[draw,circle,inner sep=2pt] (1) at (3,-2) {};
\node[draw,circle,inner sep=2pt] (1) at (2,-2) {};
\node[draw,circle,inner sep=2pt] (1) at (0,-2.5) {};
\node[draw,circle,inner sep=2pt] (1) at (0.5,-2.5) {};
\node[draw,circle,inner sep=2pt] (1) at (1,-2.5) {};
\node[draw,circle,inner sep=2pt] (1) at (1.5,-2.5) {};
\node[draw,circle,inner sep=2pt] (1) at (2,-2.5) {};
\node[draw,circle,inner sep=2pt] (1) at (2.5,-2.5) {};
\node[draw,circle,inner sep=2pt] (1) at (3,-2.5) {};
\node[draw, fill,star, star points=5, star point ratio=.4, minimum size=2.5pt, inner sep=0pt] (1) at (1.5,-3) {};
\node[draw, fill,star, star points=5, star point ratio=.4, minimum size=2.5pt, inner sep=0pt] (1) at (2,-3) {};
\node[draw,circle,inner sep=2pt] (1) at (0,-3) {};
\node[draw,circle,inner sep=2pt] (1) at (0.5,-3) {};
\node[draw,circle,inner sep=2pt] (1) at (1,-3) {};
\end{tikzpicture}
\end{center}

We can rewrite $P$, for instance, as the sum of

\hspace{1cm}
\begin{minipage}[h]{0.3\textwidth}
\begin{center}
\begin{tikzpicture}
  [decoration={markings,mark=at position 1 with {\arrow{stealth}}},
   blob/.style={circle,minimum width=6pt,inner sep=0pt},
   flow/.style={postaction={decorate}}
  ]
  \node () at (-2,-1.5) {$U = $};
\node () at (-1,0.5) {$\vdots$};
\node () at (-1,0) {$4$};
\node () at (-1,-0.5) {$3$};
\node () at (-1,-1) {$2$};
\node () at (-1,-1.5) {$1$};
\node () at (-1,-2) {$0$};
\node () at (-1,-2.5) {$-1$};
\node () at (-1,-3) {$-2$};
\node () at (-1,-3.5) {$\vdots$};
\node[draw,circle,inner sep=2pt] (1) at (0,-1) {};
\node[draw,circle,inner sep=2pt] (1) at (0,-1.5) {};
\node[draw,circle,inner sep=2pt]  at (0,-2) {};
\node[draw,circle,inner sep=2pt]  at (0.5,-2) {};
\node[draw,circle,inner sep=2pt] (1) at (0,-2.5) {};
\node[draw,circle,inner sep=2pt] (1) at (0.5,-2.5) {};
\node[draw,circle,inner sep=2pt] (1) at (1,-2.5) {};
\node[draw,circle,inner sep=2pt] (1) at (0,-3) {};
\node[draw,circle,inner sep=2pt] (1) at (0.5,-3) {};
\node[draw,circle,inner sep=2pt] (1) at (1,-3) {};
\end{tikzpicture}
\end{center}
\end{minipage}
\hspace{1cm}
\begin{minipage}[h]{0.3\textwidth}
\begin{center}
\begin{tikzpicture}
  [decoration={markings,mark=at position 1 with {\arrow{stealth}}},
   blob/.style={circle,minimum width=6pt,inner sep=0pt},
   flow/.style={postaction={decorate}}
  ]
  \node () at (-2,-1.5) {$K_1 = $};
\node () at (-1,0.5) {$\vdots$};
\node () at (-1,0) {$4$};
\node () at (-1,-0.5) {$3$};
\node () at (-1,-1) {$2$};
\node () at (-1,-1.5) {$1$};
\node () at (-1,-2) {$0$};
\node () at (-1,-2.5) {$-1$};
\node () at (-1,-3) {$-2$};
\node () at (-1,-3.5) {$\vdots$};
\node[draw,circle,inner sep=2pt] (1) at (0,-1) {};
\node[draw,circle,inner sep=2pt] (1) at (0.5,-1) {};
\node[draw,circle,inner sep=2pt] (1) at (0,-1.5) {};
\node[draw,circle,inner sep=2pt] (1) at (1,-1.5) {};
\node[draw,circle,inner sep=2pt] (1) at (0.5,-1.5) {};
\node[draw,circle,inner sep=2pt]  at (0,-2) {};
\node[draw,circle,inner sep=2pt]  at (0.5,-2) {};
\node[draw,circle,inner sep=2pt]  at (1,-2) {};
\node[draw,circle,inner sep=2pt] (1) at (0,-2.5) {};
\node[draw,circle,inner sep=2pt] (1) at (0.5,-2.5) {};
\node[draw, fill,star, star points=5, star point ratio=.4, minimum size=2.5pt, inner sep=0pt] (1) at (0,-3) {};
\end{tikzpicture}
\end{center}
\end{minipage}

\vspace{0.5cm}

\hspace{1cm}
\begin{minipage}[h]{0.3\textwidth}
\begin{center}
\begin{tikzpicture}
  [decoration={markings,mark=at position 1 with {\arrow{stealth}}},
   blob/.style={circle,minimum width=6pt,inner sep=0pt},
   flow/.style={postaction={decorate}}
  ]
    \node () at (-2,-1.5) {$K_2 = $};
\node () at (-1,0.5) {$\vdots$};
\node () at (-1,0) {$4$};
\node () at (-1,-0.5) {$3$};
\node () at (-1,-1) {$2$};
\node () at (-1,-1.5) {$1$};
\node () at (-1,-2) {$0$};
\node () at (-1,-2.5) {$-1$};
\node () at (-1,-3) {$-2$};
\node () at (-1,-3.5) {$\vdots$};
\node[draw,circle,inner sep=2pt]  at (0,-2) {};
\node[draw,circle,inner sep=2pt]  at (0.5,-2) {};
\node[draw,circle,inner sep=2pt]  at (0,-1.5) {};
\node[draw,circle,inner sep=2pt] (1) at (0,-2.5) {};
\node[draw,circle,inner sep=2pt] (1) at (0.5,-2.5) {};
\node[draw, fill,star, star points=5, star point ratio=.4, minimum size=2.5pt, inner sep=0pt] (1) at (0,-3) {};
\end{tikzpicture}
\end{center}
\end{minipage}
\hspace{1cm}
\begin{minipage}[h]{0.3\textwidth}
\begin{center}
\begin{tikzpicture}
  [decoration={markings,mark=at position 1 with {\arrow{stealth}}},
   blob/.style={circle,minimum width=6pt,inner sep=0pt},
   flow/.style={postaction={decorate}}
  ]
    \node () at (-2,-1.5) {$K_3 = $};
\node () at (-1,0.5) {$\vdots$};
\node () at (-1,0) {$4$};
\node () at (-1,-0.5) {$3$};
\node () at (-1,-1) {$2$};
\node () at (-1,-1.5) {$1$};
\node () at (-1,-2) {$0$};
\node () at (-1,-2.5) {$-1$};
\node () at (-1,-3) {$-2$};
\node () at (-1,-3.5) {$\vdots$};
\node[draw,circle,inner sep=2pt]  at (1,-0.5) {};
\node[draw,circle,inner sep=2pt]  at (0.5,-0.5) {};
\node[draw,circle,inner sep=2pt]  at (0,-0.5) {};
\node[draw,circle,inner sep=2pt]  at (0,-1) {};
\node[draw,circle,inner sep=2pt]  at (0.5,-1) {};
\node[draw, fill,star, star points=5, star point ratio=.4, minimum size=2.5pt, inner sep=0pt] (1) at (0,-1.5) {};
\end{tikzpicture}
\end{center}
\end{minipage}

Observe that all columns $K_1,K_2$ and $K_3$ are still admissible, and they are still generated in the same degrees as the starting ones. Moreover, $K=K_3$ is the maximal admissible column generated in degree $1$ such that $K \leq P$. Additionally, $U$ is monotone, with $U \leq T$.
\end{example}

\begin{remark} \label{remark cancellation}
As a consequence of Lemma \ref{lemma_maximal}, given any admissible column $H$ generated in degrees $a_1 \leq \ldots\leq a_t$,  and any monotone column $T$, we can always construct an admissible column $K_t$, generated in the largest degree $a_t$, such that $T+H-K_t$ can be written as $U + K$, with $K$ an admissible column generated in degrees $a_1,\ldots,a_{t-1}$, and $U$ a monotone column with $U \leq T$.
\end{remark}

We observe that the same column can be admissible with respect to different degrees of generators. The following lemma allows us to extend the generating set, under certain assumptions.

\begin{lemma} \label{lemma extra generator} Let $H$ be an admissible column generated in degrees $a_1,\ldots,a_t$. Let $a\in \ZZ$, and assume that the truncation $\widetilde{H}(a_1,\ldots,a_t) = (\widetilde{h}_n)_{n \in \ZZ}$ satisfies $\widetilde{h}_a >0$. Then $H$ is an admissible column, generated in degrees $a,a_1,\ldots,a_t$.
\end{lemma}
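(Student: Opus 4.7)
My plan is to exploit an existing decomposition of $H$ to split off an admissible summand generated at the new degree $a$, while keeping the remaining generating degrees intact. First I would write $H = \sum_{i=1}^t H_i$ with each $H_i$ admissible generated at $a_i$, and use the fact that an admissible column generated in degree $a_i$ vanishes in every row below $a_i$. Evaluating at row $a$ gives
\[
h_a \;=\; |\{i : a_i = a\}| + \sum_{i : a_i < a}(H_i)_a.
\]
The hypothesis $\widetilde{h}_a > 0$ is precisely $h_a > |\{i : a_i = a\}|$, so there exists an index $j$ with $a_j < a$ and $(H_j)_a \geq 1$. This singles out the summand $H_j$ from which a new generator in degree $a$ can be extracted.

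The core step is then a module-theoretic splitting of $H_j$. By Macaulay's theorem, an admissible column generated in degree $a_j$ is realized as the Hilbert function of some cyclic graded $R$-module $M_j = R/I_j(-a_j)$; in particular $\dim_k (M_j)_a = (H_j)_a \geq 1$. I would pick a one-dimensional graded subspace $V \subseteq (M_j)_a$ and set $N := R\cdot V \subseteq M_j$, which is a cyclic submodule generated in degree $a$ and supported in degrees $\geq a$. Since $a_j < a$, we have $N_{a_j} = 0$, so the generator of $M_j$ in degree $a_j$ has nonzero image in $M'_j := M_j/N$, making $M'_j$ a cyclic module still generated in degree $a_j$. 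The short exact sequence $0 \to N \to M_j \to M'_j \to 0$ yields the additive splitting $H_j = [N] + [M'_j]$, where $[N]$ is admissible generated at $a$ and $[M'_j]$ is admissible generated at $a_j$.

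Setting $K := [N]$ and replacing $H_j$ by $[M'_j]$ in the original decomposition of $H$ produces
\[
H \;=\; K + [M'_j] + \sum_{i \neq j} H_i,
\]
exhibiting $H$ as a sum of $t+1$ admissible columns generated in the degrees $a, a_1, \ldots, a_t$. The only step that requires more than bookkeeping is the appeal to Macaulay's theorem that realizes an admissible column as an honest Hilbert function of a cyclic module; once this is in hand, the rest follows formally from the non-negativity and additivity of Hilbert functions in short exact sequences. I would expect no further obstacle beyond this realization step.
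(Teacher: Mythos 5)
Your argument is correct, but it takes a genuinely different route from the paper's. You invoke the existence direction of Macaulay's theorem to realize the admissible column $H_j$ as the Hilbert function of a cyclic finite-length module $M_j \cong (R/I_j)(-a_j)$ over $R=k[x,y]$, then split off a cyclic submodule $N = R\cdot v$ for $0\neq v\in (M_j)_a$, observing that $N\cong (R/\ann v)(-a)$ and $M_j/N$ are both cyclic with $\m$-primary annihilators, so the short exact sequence $0\to N\to M_j\to M_j/N\to 0$ gives $H_j = [N] + [M_j/N]$ with both summands admissible (generated in degrees $a$ and $a_j$ respectively). The paper instead argues purely combinatorially: it takes $K$ to be the \emph{maximal} admissible column generated in degree $a$ with $K\leq \widetilde{H_1}(a_1)$ and verifies case by case, using the explicit form of the conditions $(*_a^b)$, that $H_1 - K$ remains admissible generated in degree $a_1$. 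The module-theoretic approach is shorter and more conceptual, and the realizability fact you appeal to is indeed the content of Macaulay's theorem as alluded to in the remark following Algorithm~\ref{algo_finite_length}; the only small gap to flag is that you do not spell out why $[N]$ has no internal zeros (condition (1) of $(*_a^b)$), though this is immediate for cyclic quotients of $k[x,y]$ since $(R/J)_n = 0$ forces $\m^n\subseteq J$. The paper's combinatorial proof is self-contained and produces the maximal such $K$, which fits naturally with the greedy strategy of Algorithm~\ref{Algorithm}, whereas your $N$ depends on an arbitrary choice of $v$; for the statement of the lemma, however, any choice suffices.
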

\begin{proof}
Write $H=H_1+\cdots + H_t$, where each $H_i$ is an admissible column, generated in degree $a_i$. Since we are assuming that $\widetilde{H}(a_1,\ldots,a_t)_a > 0$, we must have $\widetilde{H_i}(a_i)_a >0$ for some $i$. Say $i=1$. We consider $K$ to be the maximal admissible column, generated in degree $a$, that satisfies $K \leq \widetilde{H_1}(a_1)$. We claim that $W = H_1 - K$ is an admissible column, generated in degree $a_1$. In fact, let $W=(w_n)_{n \in \ZZ}$, $H_1=(h_n)_{n \in \ZZ}$, and $K=(k_n)_{n \in \ZZ}$. Since $K \leq \widetilde{H_1}(a_1)$, and $K$ is generated in degree $a$, we necessarily have $a>a_1$. Moreover, we have $w_n = h_n$ for all $n < a$. In particular, $w_n=0$ for $n<a_1$ and $w_{a_1}=1$. To show that $W$ is admissible, we distinguish a few cases. For $n < a$, $w_n=h_n$, so satisfies the conditions to be admissible. For $n \geq a$, first assume that $h_{n-1}= n-a_1$, which is the maximal possible value for $H_1$ in that degree. Since $K$ is chosen to be maximal, we then must have $k_{n-1} = n-a$; observe that $k_{n-1}= n-a<n-a_1=h_{n-1}$. Moreover, we will have $h_n \leq n-a_1+1$ because $H_1$ is admissible, and $k_n = \min\{h_n,n-a+1\}$, again by maximality. In particular, we have $w_{n-1} = h_{n-1}-k_{n-1} = (n-a_1) - (n-a) = a-a_1$, and $w_n = h_n-k_n \leq (n+1-a_1) - (n+1-a) = a-a_1 = w_{n-1}$. So $W$ would be admissible in this case. On the other hand, if $h_{n-1}<n-a$, by maximality we still have $k_{n-1} = \min\{h_{n-1},n-a\}$. Thus $w_{n-1} = h_{n-1} - \min\{h_{n-1},n-a\} = \max\{0,h_{n-1}-n+a\}$. We also have $h_n \leq h_{n-1}$, because $H_1$ is admissible, and $k_n =\min\{h_{n-1},n-a+1\}$, by maximality. Therefore we get $w_n \leq \max\{0,h_{n-1}-n+a-1\} \leq \max\{0,h_{n-1}-n+a\} = w_{n-1}$. Either way, $W$ is admissible. This shows that $H = W+K + H_2+\ldots+H_t$ is admissible, generated in degrees $a,a_1,\ldots,a_t.$
\end{proof}
We are now ready to describe the algorithm. 

\begin{algorithm} \label{Algorithm} We start with the cohomology table of a finitely generated graded $R$-module $M$, that is, we start with $[\HH^\bullet_\m(M)] = (h^i(M)_j)$ for $i=0,1,2$ and $j \in \ZZ$.

We initialize $\HH = [\HH^\bullet_\m(M)]$. The goal is to describe how to subtract from $H$ positive rational combinations of elements from $\Lambda_2$ (defined as in Theorem \ref{THM decomposition}), to eventually get to the trivial table $0$. At each step, we will redefine $\HH$ to be the table we obtain from subtracting such combinations. In the end, solving for $[\HH^\bullet_\m(M)]$ will result in the desired decomposition of $[\HH^\bullet_\m(M)]$. Throughout, we denote with $\HH^0, \HH^1$ and $\HH^2$ the first, second, and third column of $\HH$, respectively. Moreover, we denote by $h^i_j$ the entry in row $j \in \ZZ$ of the column $\HH^i$. 

\begin{itemize}
\item[\bf Step 1] Replace $\HH$ by $\HH-\sum_{n \in \ZZ} h^0_n [\HH^\bullet_\m(k)(-n)]$.  \\

\item[\bf Step 2] If the set $\{ n \in \ZZ \mid  \Delta^2_{\HH^2}(n-2) \ne 0\}$ is empty, go to {\bf Step 4}. Otherwise, let $a$ be its maximum. If $h^1_a = 0$, replace $\HH$ by $\HH-[\HH^\bullet_\m(R)(-a)]$. If $h^1_a >0$, proceed to {\bf Step 3}.

\item[\bf Step 3] 
Set $K = (k_n)_{n \in \ZZ}$ by
\[
\ds k_n = \left\{ \begin{array}{ll} \min\{h^1_n,n-a+1\} & \mbox{ if } n\geq a \\ 0 & \mbox{ if } n<a \end{array} \right.
\]
Use Algorithm \ref{algo_finite_length} to write $K = \sum_n r_n[R/\m^{n+1}(-a)]$ for some $r_n \in \QQ_{\geq 0}$. Replace $\HH$ by $\HH-\sum_nr_n [\HH^\bullet_\m(\m^{n+1})(-a)]$ and return to {\bf Step 2}. \\

\item[\bf Step 4] If the set $\{n \in \ZZ \mid  \Delta^1_{\HH^1}(n-1)  \ne  0\}$ is empty, then FINISH. Otherwise, let $b$ be its maximum. Replace $\HH$ by $\HH-[\HH^\bullet_\m(k[x])(-b)]$ and repeat {\bf Step 4}. \\
\end{itemize}

\end{algorithm}

\begin{proof}
We prove that the Algorithm terminates with the trivial table $\HH = 0$, and thus produces the desired decomposition of $[\HH^\bullet_\m(M)]$. 
{\bf Step 1} removes the first column, that is, the one corresponding to $\HH^0_\m(M)$. Note that we are subtracting only a finite sum of tables of the form $[\HH^\bullet_\m(k)(-n)]$, because of Proposition \ref{proposition differences}. 

By collecting the values of $a$ from {\bf Step 2} that correspond to $h^1_a>0$, we obtain a sequence of integers $a_1 \geq a_2 \geq \cdots \geq a_t$ that satisfies the following three conditions:
\begin{enumerate}
\item $\HH^2_\m(M) \cong \HH^2_\m(R)(-a_1) \oplus \cdots \oplus \HH^2_\m(R)(-a_t) \oplus F$,
where $F$ is a graded free module generated in degrees corresponding to $a$ with $h^1_a = 0$.  
\item If $0 \to T \to M/\HH^0_\m(M) \to C \to 0$ is a short exact sequence as in the proof of Theorem \ref{THM decomposition}, then there is a surjection $R(-a_1) \oplus \cdots \oplus R(-a_t) \to \HH^1_\m(C) \to 0$.
\item For all $i=0,\ldots,t-1$, if we let $\widetilde{H^1}(a_1,\ldots,a_i) = (\widetilde{h}_n)_{n \in \ZZ}$, we have $\widetilde{h}_{a_{i+1}} > 0$. 
\end{enumerate}
The first two claims follow from Remark~\ref{top generators}. The third condition comes from the way the sequence $a_1,\ldots,a_t$ appears in {\bf Step 2}.

Now, recall that in the proof of Theorem \ref{THM decomposition} it is shown that $[\HH^1(M)] = [\HH^1_\m(T)] + [\HH^1_\m(C)]$, where this decomposition comes from the condition (2) described above. Since $\HH^1_\m(C)$ has finite length with generators of degrees contained in the set $\{a_1,\ldots,a_t\}$, by Proposition \ref{admissible HF modules}, its Hilbert function $[\HH^1_\m(C)]$ is an admissible column, generated in degrees contained in the set $\{a_1,\ldots,a_t\}$. Because of  condition (3) above, we may use Lemma \ref{lemma extra generator} to extend the generating set and 
assume that $[\HH^1_\m(C)]$ is an admissible column generated in all degrees $a_1,\ldots,a_t$. Moreover, $[\HH^1_\m(T)]$ is monotone, by Proposition~\ref{proposition differences}. Therefore $\HH^1$ is the sum of a monotone column, and an admissible column, with generators in degrees $a_1 \geq \ldots \geq a_t$.

At each iteration of {\bf Step 3} the constructed column $K$  is, by definition, 
the maximal admissible column generated in the largest possible degree $a$ and such that $K \leq \HH^1$.
This column is decomposed using using Algorithm \ref{algo_finite_length} as a
non-negative rational linear combination of the tables of $[R/\m^{n+1}(-j)]$. Recall that the table $[\HH^\bullet_\m(\m^{n+1})(-a)]$ has second column equal to $[R/\m^{n+1}(-a)]$. Moreover, we have $\HH^2_\m(\m^{n+1})(-a) \cong \HH^2_\m(R)(-a)$ for all $n$. Since, as shown in the proof of Theorem \ref{THM decomposition}, we have $\sum_n r_n=1$, we conclude that $\sum_n r_n [\HH^\bullet_\m(\m^{n+1}(-a)]$ has:
\begin{enumerate}
\item First column equal to zero.
\item Second column equal to $K$. In particular, by Remark \ref{remark cancellation}, the second column of the table $\HH - \sum_n r_n [\HH^\bullet_\m(\m^{n+1}(-a)]$ is equal to some $U + A$, where $U$ is monotone with $U \leq  [\HH^1_\m(T)]$, and $A$ is still admissible, now generated in the remaining degrees $a_i, \ldots,a_{t}$. 
\item Third column equal to the third column of $[\HH^\bullet_\m(R)(-a)]$. In particular, by condition (3) above, 
the third column of $\HH-\sum_n r_n [\HH^\bullet_\m(\m^{n+1}(-a)]$ is equal to $[\HH^2_\m(G)]$
where $G$ is a free module generated in the remaining degrees $\leq a$. 
\end{enumerate}  

Thus after repeating {\bf Step 2} and {\bf Step 3} as required, we eliminate the third column of $\HH$. Moreover, the remaining second column, $\HH^1$, is now monotone with $\HH^1 \leq [\HH^1_\m(T)]$.

{\bf Step 4} constructs the monotone column $U$ remaining in $\HH^1$ using tables of the form $[\HH^\bullet_\m(k[x](-j))]$. By Proposition \ref{proposition differences}, $\{n \in \ZZ \mid \Delta^1_{\HH^1_\m(T)}(n) \ne 0\}$ is a finite set and $\Delta^1_{\HH^1_\m(T)}(n)<\infty$ for all $n$.
Since $U$ is monotone and $U \leq [\HH^1_\m(T)]$ by Lemma \ref{lemma_maximal}, it follows that $0 \leq \Delta^1_U (n) < \infty$ for all $n$, and it is zero for all but finitely many values of $n$. Note that $\Delta^1_{\HH^1_\m(k[x](-j))}(n) = 1$ if $n = -j$ and is $0$ for all other values of $n$. Thus, each iteration of {\bf Step 4} decreases by $1$ precisely one nonzero entry of $\Delta^1_U$, and the algorithm returns the zero table after finitely many steps.
\end{proof}

\begin{remark} Let $M$ be a finitely generated $k[x]$-module. If one runs Algorithm \ref{Algorithm} with the table $[\HH^\bullet_\m(M)]$, ignoring Step 2 and Step 3 (which require a third column in the matrix), then one gets a decomposition of $[\HH^\bullet_\m(M)]$ in terms of the extremal points $\{[\HH^\bullet_\m(N)] \mid N \in \Lambda_1\}$ of the cone in dimension one, as described in Theorem \ref{THM Dim 1}.
\end{remark}

\begin{example}
Consider the following $R=k[x,y]$-module:
\[
M = {\rm coker}
\begin{pmatrix} 
x^3 & x^2y^2 & x^4y^2\\
x^2y & x^3y + xy^3 & x^4y^2 + x^2y^4\\
x^3 + y^3 & x^4 & x^3y^3\\
x^3 & 2x^2y^2 & x^5y\\
y^3 & y^4 & x^6
\end{pmatrix}
\]

Using Macaulay 2 \cite{M2}, one can check that $M$ has transposed local cohomology table 
\[
[\HH^\bullet_\m(M)]^T = 
\begin{array}{|c||c|c|c|c|c|c|c|c|c|c|c|c|c|c|}
\hline
n & 5 & 4 & 3 & 2 & 1 & 0 & -1 & -2 & -3 & -4 & -5 & -6 & -7 & -8 \\
\hline
h^0(M)_n &0& 0 & 0 & 0 & 0 &0&0 & 0& 0&  0 & 0 & 0 & 0 & 0 \\
\hline
h^1(M)_n &0& 1 & 2 & 4 & 7 &10&13 & 11 & 9 &  7 &  5 & 3  & 2  & 2 \\
\hline
h^2(M)_n & 0 & 0 & 0 & 0 & 0 & 0 & 0  & 0   & 0 &  0 & 0  & 0  &  1 & 3\\
\hline
\end{array}
\]

From now on, since the column $[\HH^0_\m(M)]$ consists of all zeros, we will disregard it. 

The first meaningful step in the algorithm is Step 3: $a = -5$ gives an admissible column $K = (k_n)_{n \in \ZZ}$, generated in degree $-5$, that we can write as
\begin{align*}
\begin{bmatrix}
n & k_n\\
4 & 1\\
3 & 2 \\
2 & 4 \\
1 & 7 \\
0 & 6 \\
-1 & 5\\
-2 & 4\\
-3 & 3\\
-4 & 2\\
-5 & 1
\end{bmatrix}
 & = \frac 1{10} 
\begin{bmatrix}
n & \\
4 & 10\\
3 & 9 \\
2 & 8 \\
1 & 7 \\
0 & 6 \\
-1 & 5\\
-2 & 4\\
-3 & 3\\
-4 & 2\\
-5 & 1
\end{bmatrix}
+ \frac {11}{90}
\begin{bmatrix}
n & \\
4 & 0\\
3 & 9 \\
2 & 8 \\
1 & 7 \\
0 & 6 \\
-1 & 5\\
-2 & 4\\
-3 & 3\\
-4 & 2\\
-5 & 1
\end{bmatrix}
+ \frac {5}{18}
\begin{bmatrix}
n & \\
4 & 0\\
3 & 0 \\
2 & 8 \\
1 & 7 \\
0 & 6 \\
-1 & 5\\
-2 & 4\\
-3 & 3\\
-4 & 2\\
-5 & 1
\end{bmatrix}
+ \frac 12 
\begin{bmatrix}
n & \\
4 & 0\\
3 & 0 \\
2 & 0 \\
1 & 7 \\
0 & 6 \\
-1 & 5\\
-2 & 4\\
-3 & 3\\
-4 & 2\\
-5 & 1
\end{bmatrix}
\\ \\ 
 & = \frac{1}{10} \ [\HH^1_\m(\m^{11}(5))] + \frac{11}{90} \ [\HH^1_\m(\m^{10}(5))] + \frac{5}{18} \ [\HH^1_\m(\m^9(5))] + \frac{1}{2} \ [\HH^1_\m(\m^8(5))].
\end{align*}
Subtracting $\frac{1}{10} \ [\HH^\bullet_\m(\m^{11}(5))] + \frac{11}{90} \ [\HH^\bullet_\m(\m^{10}(5))] + \frac{5}{18} \ [\HH^\bullet_\m(\m^9(5))] + \frac{1}{2} \ [\HH^\bullet_\m(\m^8(5))]$ from $[\HH^\bullet_\m(M)]$, we get
\[
\begin{array}{|c||c|c|c|c|c|c|c|c|c|c|c|c|c|c|}
\hline
n           & 5 & 4 & 3 & 2 & 1 & 0 & -1 & -2 & -3 & -4 & -5 & -6 & -7 & -8 \\
\hline
h^1(M)_n &0& 0 & 0 & 0 & 0 &4&8 & 7 & 6 &  5 &  4 & 3  & 2  & 2 \\
\hline
h^2(M)_n& 0 & 0 & 0 & 0 & 0 & 0 & 0  & 0   & 0 &  0 & 0  & 0  &  0 & 1\\
\hline
\end{array}
\]
Next, for $a = -6$ we construct an admissible column $K = (k_n)_{n\in \ZZ}$, generated in degree $-6$, as follows:
\begin{align*}
& \begin{bmatrix}
n & k_n\\
0 & 4\\
-1 & 6\\
-2 & 5\\
-3 & 4\\
-4 & 3\\
-5 & 2 \\
-6 & 1 \\
\end{bmatrix}
= \frac 47 
\begin{bmatrix}
n & \\
0 & 7\\
-1 & 6\\
-2 & 5\\
-3 & 4\\
-4 & 3\\
-5 & 2 \\
-6 & 1 \\
\end{bmatrix}
+ \frac 37
\begin{bmatrix}
n & \\
0 & 0\\
-1 & 6\\
-2 & 5\\
-3 & 4\\
-4 & 3\\
-5 & 2 \\
-6 & 1 \\
\end{bmatrix}
\\ \\
& = \frac{4}{7} \ [\HH^1_\m(\m^8(6))] + \frac{3}{7} [\HH^1_\m(\m^7(6))].
\end{align*}
Subtracting $\frac{4}{7} \ [\HH^\bullet_\m(\m^8(6))] + \frac{3}{7} [\HH^\bullet_\m(\m^7(6))]$ leaves the table 
\begin{align*}
& \begin{array}{|c||c|c|c|c|c|c|c|c|c|c|c|c|c|c|}
\hline
n           & 5 & 4 & 3 & 2 & 1 & 0 & -1 & -2 & -3 & -4 & -5 & -6 & -7 & -8 \\
\hline
h^1(M)_n &0& 0 & 0 & 0 & 0 & 0&2 & 2 & 2  &  2 &  2 & 2  & 2  & 2 \\
\hline
h^2(M)_n& 0 & 0 & 0 & 0 & 0 & 0 & 0  & 0   & 0 &  0 & 0  & 0  &  0 & 0\\
\hline
\end{array} \\ \\
& = 2[\HH^\bullet(k[x])].
\end{align*}

\end{example}

\section{Facets of the cone of local cohomology tables in dimension two} \label{Section Algorithm facets}
We adopt the following notation. 
In the space of $3 \times \mathbb Z$-matrices let $\mathbb M$ denote the 
subspace formed by the matrices with finitely many nonzero entries. 
We consider the cone $C \subseteq \mathbb M$ generated by the matrices
$\Delta\Lambda_2 = \{E_{i,s}, \Gamma_s(n) \mid i \in \{0, 1, 2\}, s \in \ZZ, n \in \ZZ_{\geq 1} \}$, where 
$E_{i,s}$ are the elementary matrices $\{e_{i,s} = 1, e_{j, t} = 0 \text { for } (i, s) \neq (j, t) \}$ and
\[
\Gamma_s(n) = 
\begin{array}{|c||c|c|c|c|c|c|c|}
\hline
i & \cdots & s+n+1 & s+n & \cdots & s+1 & s & \cdots\\
\hline
\gamma_{0, i} & 0 & 0 & 0 & 0  & 0  & 0 & 0\\
\gamma_{1,i} & 0 & n & -1 & -1 & -1 & 0 & 0 \\
\gamma_{2, i} & 0 & 0 & 0  & 0 & 0 & 1& 0\\
\hline
\end{array}
\]

By Proposition~\ref{proposition differences}, we can transform a local cohomology table 
$(\HH^0_\m(M)_i, \HH^1_\m(M)_i, \HH^2_\m(M)_i)$ to a point in $\mathbb M$ given by
$(\Delta^0 \HH^0_\m (M)_i, \Delta^1 \HH^1_\m(M)_i, \Delta^2 \HH^2_\m(M)_i)$.
This map is injective, and 
the extreme rays $\Lambda_2$ from Theorem~\ref{THM decomposition} 
map to $\Delta \Lambda_2$ (hence, the notation).
Thus the cone $C$ corresponds to the cone of the local cohomology tables.

The space $\mathbb M$ is naturally filtered by bounding the support of its elements:
\[
\mathbb M_{[a,b]} := \{A = \{a_{i, n}\}\in \mathbb M \mid a_{i, n} = 0 \text{ for all } i \in \{0,1, 2\}, n < a \text{ and } n > b\}.
\]
Moreover, this decomposition is compatible with the construction of $C$. Namely  $C_{[a,b]}$, the cone spanned by the rays supported in $\mathbb M_{[a,b]}$ is the intersection $\mathbb M_{[a,b]} \cap C$.

Now we will define the functionals on $\mathbb M$ that will give us the facet equations. 

\begin{definition} \label{defn hyperplanes} Let $A = \{a_{i,j}\} \in \mathbb M$. 
For $s \in \ZZ$, we set $\tau_s(A)= a_{1,s} + \sum_{i \leq s-1} a_{2,i}$, $\mu_s (A) = a_{0, s}$, 
and $\phi_s(A)= a_{2,s}$. Finally, for an integer $n \geq 0$ and $s \in \ZZ$, we set
\[
\ds \pi_{n,s}(A)= \sum_{i > s+n} a_{1,i} + (n+1)a_{1,s+n} + \sum_{i=0}^{n-1} (i+1)a_{2,s+i}.
\] 
We let $\mathcal{H}$ be the set of functionals on the space $\mathbb M$ defined by these equations.
\end{definition}

We want to show that for all $a< b$ the cone $C_{[a,b]}$ is cut by the hyperplanes defined by the functionals belonging to $\mathcal H$, thus proving that $\mathcal H$ give the facet equations of $C = \cup C_{[a, b]}$. 
By invariance under shifts, it is enough to consider $C_{[0, d]}$. For $d \geq 0$, consider the following list of functionals
\[
\ds \mathcal H_{[0,d]}= \left\{ \begin{array}{ll}
\mu_s & \text{ for } 0 \leq s \leq d, \\
\tau_s   & \text{ for } 0 \leq s < d, \\
\phi_s & \text{ for } 0 \leq s \leq d, \\
\pi_{0,s} & \text{ for } 1 \leq s \leq d, \\
\pi_{n,s} & \text{ for } 1 \leq n \leq d - 2,  1 \leq s < d - n
\end{array}\right\}.
\]
The following theorem allows us to describe the facets of the cone $C_{[0,d]}$, by identifying it with the cone defined by the list of functionals $\mathcal{H}_{[0,d]}$. 
\begin{theorem}\label{facet thm}
For $d \geq 0$, let $D_{[0,d]}$ be the cone defined by $\{A \in \mathbb M_{[0,d]} \mid H(A) \geq 0$ for all $H \in \mathcal{H}_{[0,d]}\}$. Then $D_{[0,d]} = C_{[0, d]}$.
\end{theorem}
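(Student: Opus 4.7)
The inclusion $C_{[0,d]} \subseteq D_{[0,d]}$ is the direct half: one must check that every generator $E_{i,s}$ and $\Gamma_t(m)$ of $C_{[0,d]}$ is non-negative on every functional in $\mathcal{H}_{[0,d]}$. The cases $\mu_s$ and $\phi_s$ are immediate from inspection. For $\tau_s$ evaluated on $\Gamma_t(m)$, the $-1$'s in row~$1$ at positions $t+1, \ldots, t+m$ are counterbalanced by the $+1$ at $(2,t)$ sitting inside the partial sum $\sum_{i \leq s-1} a_{2,i}$; a short case split on the position of $s$ relative to the support $[t, t+m+1]$ of $\Gamma_t(m)$ closes this. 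For $\pi_{n,s}$ on $\Gamma_t(m)$, the coefficients $1$, $n+1$, and $i+1$ are precisely those of a discrete integration against the shape of $\Gamma_t(m)$, so the weighted sum telescopes and yields a non-negative integer in every case.

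For the reverse inclusion $D_{[0,d]} \subseteq C_{[0,d]}$ I would exhibit, for every $A \in D_{[0,d]}$, an explicit decomposition into extremal rays via a greedy procedure (to be codified in Algorithm~\ref{facet algo}). First, subtract $\sum_s a_{0,s} E_{0,s}$; since $\mu_s(A) \geq 0$, the residual still lies in $D_{[0,d]}$. Next, sweep through row~$2$ from the highest degree $s=d$ downward. At each position $s^*$ with $a_{2, s^*}>0$ in the current residual, determine—using the inequalities $\pi_{n, s^*} \geq 0$—the maximal coefficient $c \geq 0$ and the largest $n$ such that $c\, \Gamma_{s^*}(n)$ can be subtracted without violating any inequality in $\mathcal{H}_{[0,d]}$, subtract it, and iterate (possibly peeling off copies of $E_{2, s^*}$ along the way). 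Once $a_{2, s^*}=0$, move to the next lower $s$; after finitely many steps row~$2$ is empty. What remains is supported in row~$1$, and the inequalities $\tau_s \geq 0$ now force every surviving entry to be non-negative, so the residual is $\sum_s a_{1,s} E_{1,s}$.

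The main obstacle is the invariant ``residual lies in $D_{[0,d]}$'' throughout the greedy sweep of row~$2$. Concretely, the key lemma to establish is: if $A \in D_{[0,d]}$ has $a_{2, s^*}>0$ with $s^* = \max\{s : a_{2,s}>0\}$, and $c,n$ are chosen greedily as above, then $H(A - c\,\Gamma_{s^*}(n)) \geq 0$ for every $H \in \mathcal{H}_{[0,d]}$. I would prove this by evaluating $H$ on $\Gamma_{s^*}(n)$ for each family of functionals separately, comparing with the slack $H(A)$, and using that the greedy choice of $c$ saturates precisely the functional that is tight at $A$. The most delicate computation is with the $\pi_{n', s'}$ functionals for $(n', s')$ varying against the support $[s^*, s^*+n+1]$ of $\Gamma_{s^*}(n)$: the special geometry of $\Gamma_{s^*}(n)$ ensures that only $\pi_{n', s'}$ with $s' \leq s^*$ can be affected, which keeps the induction on the filtration $[0, s^* - 1]$ clean. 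Termination follows because each greedy subtraction strictly decreases the total row-$2$ mass of the residual.
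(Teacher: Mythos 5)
Your overall architecture — split off row $0$, greedily subtract $\Gamma$'s to clear row $2$, then use the $\tau$-constraints to read off row $1$ as a non-negative combination of $E_{1,s}$ — is the correct one, and matches the shape of the paper's Algorithm~\ref{facet algo}. The difference is in how the greedy sweep is organized, and this is where your proposal has a concrete error.

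You claim that the ``special geometry of $\Gamma_{s^*}(n)$ ensures that only $\pi_{n',s'}$ with $s' \leq s^*$ can be affected.'' This is false, and in fact the opposite is essentially what holds. A direct calculation (and this is exactly what Lemma~\ref{Lem Not Gamma} records for the $\Gamma$'s that end at the top of the window) gives $\pi_{0,s'}\bigl(\Gamma_{s^*}(n)\bigr) = s'-s^*-1 > 0$ for $s^*+2 \leq s' \leq s^*+n+1$, while $\pi_{n',s'}\bigl(\Gamma_{s^*}(n)\bigr) = 0$ for $s' \leq s^*+1$. So the $\pi$-functionals that \emph{do} get consumed all sit strictly above $s^*$, not below, and the ``clean induction on $[0, s^*-1]$'' you invoke does not go through on that basis. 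A related omission: whenever you take $n$ strictly less than its maximal value $d - s^* - 1$, the top of $\Gamma_{s^*}(n)$ lands at $s^* + n + 1 < d$, and then the functionals $\tau_s$ for $s > s^*+n+1$ (which evaluate to $1$ or $n+1$ on $\Gamma_{s^*}(n)$) enter the picture; you never account for these.

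The paper avoids both problems by a more rigid choice: it fixes the top position $w$ of the $\Gamma$'s, always subtracting $\Gamma_{w-1-k}(k)$ with $k$ increasing (equivalently, the star position decreasing), so that every $\Gamma$ ends exactly at $w$ and hence meets no $\tau_s$ at all (cf.\ Lemma~\ref{Lem Not Gamma}). When $a_{1,w}$ is exhausted, it shrinks the window to $[0, w-1]$ and invokes Remark~\ref{restrict H} together with induction on $d$, which precisely retires the boundary constraint $\tau_{w-1}$. Your greedy sweep, which instead pins the star position $s^*$ and lets $n$ float, produces the same sequence of $\Gamma$'s as long as the maximal $n$ remains available, but once it needs to decrease $n$ it diverges from the paper and requires a genuinely different (and so far unwritten) argument to establish termination and to justify that $a_{2,s^*}$ can always be fully cleared. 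In addition, your termination claim (``each subtraction strictly decreases row-$2$ mass'') fails when the greedily chosen $c$ is $0$; this forces extra bookkeeping that you have not supplied. As stated, your proposal identifies the right invariant but does not prove the reverse inclusion $D_{[0,d]} \subseteq C_{[0,d]}$.
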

\begin{proof}
We have $C_{[0, d]} \subseteq D_{[0, d]}$ by direct verification of positivity. 
The other inclusion will be proven by providing Algorithm~\ref{facet algo}.
\end{proof}

\begin{remark}\label{restrict H}
We may identify $\mathbb M_{[0, d- 1]}$ with a subset of $\mathbb M_{[0,d]}$ given by $\pi_{0, d} (A) = 0$, $\phi_d (A) = 0$ and $\mu_d (A) = 0$,
or, simply, $a_{0,d} = a_{1, d} = a_{2, d} = 0$.
Via this identification we have
$D_{[0,d-1]} = D_{[0,d]} \cap \mathbb M_{[0, d-1]} $.

One inclusion is clear, because $\mathcal H_{[0,d]}$ still provides non-negative functionals by restriction.
However, the nonzero restrictions that are not in $\mathcal H_{[0,d-1]}$
are now a positive linear combination of the functionals in $\mathcal H_{[0,d - 1]}$. 
Namely, for $n \geq 1$ we have a decomposition 
$\pi_{n, d - 1 - n} = (n + 1) \pi_{0, d} + n\phi_{d - 1} + \cdots + \phi_{d - 1 - n}$ as functionals on $\mathbb M_{[0, d- 1]}$.
\end{remark}

\begin{remark}
It can be checked by testing appropriate points that the list of functionals $\mathcal{H}_{[0,d]}$ minimally defines the cone $D_{[0,d]}$. In other words, removing any of the functionals would define a strictly larger cone than $D_{[0,d]}$. It then follows from Theorem~\ref{facet thm} %{\color{blue}MENTION MINIMALITY} 
that $C_{[0,d]}$ has an equal number of extremal rays and facets.
In fact, computations on Macaulay 2 suggest that the entire $f$-vector is symmetric.
This may make the reader suspect that $C_{[0,d]}$ is self-dual,  
however, the incidence matrix of $C_{[0, 4]}$ cannot be turned into a symmetric matrix by reordering rays and facets:
there is precisely one facet which contains $14$ extreme rays, $\tau_3 (x) = 0$, 
but two extreme rays that belong to $14$ facets, $E_{1, 3}$ and $E_{1,4}$. 
It is still possible, although unlikely, that the entire cone $C$ is self-dual.
\end{remark}

\subsection{Proofs}

We start with lemmas describing relations between $\Delta \Lambda_2$ and $\mathcal H$.

\begin{definition}
For $A \in \mathbb M_{[0,d]}$, we define 
$
\Supp (A) = \{H \in \mathcal H_{[0,d]} \mid H(A) \neq 0\}.
$
\end{definition}

\begin{lemma}\label{Lem Not Gamma}
For $\pi_{n,s} \in \mathcal H_{[0,d]}$ and $1 \leq k \leq d -1$ we have 
$\pi_{n, s} (\Gamma_{d-1-k}(k)) = \max (0, s - d + k)$. 
In particular, 
\[
\Supp(\Gamma_{d-1-k}(k)) = \{\phi_{d - 1 - k}; \pi_{0, s}, d + 1 - k \leq s \leq d; 
\pi_{n,s} \text{ for }1 \leq n \leq \max (0, k - 2), d + 1 - k \leq s \leq d-n-1
\}.
\] 
\end{lemma}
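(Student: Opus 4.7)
The approach is purely computational. First I would unpack $\Gamma := \Gamma_{d-1-k}(k)$: setting $s^{\ast} = d-1-k$ and $n^{\ast} = k$, its nonzero entries are $\gamma_{2, d-1-k} = 1$, $\gamma_{1, i} = -1$ for $d-k \leq i \leq d-1$, and $\gamma_{1, d} = k$. In particular, the full row-$1$ sum $\sum_i \gamma_{1, i}$ equals $0$, and the support of $\Gamma$ lies in $\{d-1-k, \ldots, d\} \subseteq \{0, \ldots, d\}$.

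Next I would substitute these entries into
\[
\pi_{n, s}(\Gamma) = \sum_{i > s+n} \gamma_{1, i} + (n+1)\gamma_{1, s+n} + \sum_{i=0}^{n-1}(i+1)\gamma_{2, s+i}
\]
and perform a case analysis on $m := s + n$. Since $\pi_{n,s} \in \mathcal H_{[0,d]}$ forces $m \leq d-1$ whenever $n \geq 1$, and $s \leq d$ (so $m \leq d$) when $n = 0$, only four ranges arise: \emph{(i)} $m \leq d-k$: for $m < d-k$ each of the three sums is individually zero, while for $m = d-k$ they evaluate to $1$, $-(n+1)$, and $n$ respectively, totalling zero; \emph{(ii)} $d-k < m \leq d-1$ with $s \leq s^{\ast}$: the three contributions $(m - s^{\ast})$, $-(n+1)$, $(s^{\ast} - s + 1)$ collapse to $0$; \emph{(iii)} $d-k < m \leq d-1$ with $s \geq d-k$: the third sum vanishes and the first two give $(m - s^{\ast}) - (n+1) = s - d + k \geq 0$; \emph{(iv)} $m = d$ (only possible for $n=0$, $s = d$): only the middle term contributes, yielding $k$. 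In every case the total equals $\max(0, s - d + k)$, establishing the main identity.

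For the description of $\Supp(\Gamma)$, I would check each functional family directly against the entries computed above. Clearly $\mu_s(\Gamma) = 0$ since row $0$ vanishes, and $\phi_s(\Gamma) = \gamma_{2, s}$ is nonzero only at $s = d-1-k$. The functional $\tau_s(\Gamma) = \gamma_{1, s} + \sum_{i \leq s-1} \gamma_{2, i}$ vanishes throughout its admissible range $0 \leq s < d$: for $s < d-k$ both summands are zero, while for $d-k \leq s \leq d-1$ the $-1$ from $\gamma_{1, s}$ is canceled by the $1$ coming from the single $\gamma_{2, d-1-k}$. Finally, the main formula yields $\pi_{n, s}(\Gamma) \neq 0$ iff $s \geq d + 1 - k$; intersecting with the index ranges of $\mathcal H_{[0,d]}$ produces exactly the support listed in the lemma, with the condition $n \leq k-2$ for $n \geq 1$ accounting for the $\max(0, k-2)$ cap (correctly empty when $k \leq 2$).

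There is no real conceptual difficulty: the main obstacle is careful bookkeeping around the boundary cases $m = d-k$ and $s = s^{\ast}$, together with keeping track of which pairs $(n, s)$ actually lie in $\mathcal H_{[0, d]}$.
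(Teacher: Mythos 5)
Your proof is correct and takes essentially the same route as the paper's: substitute the explicit entries of $\Gamma_{d-1-k}(k)$ into each functional, verify $\pi_{n,s}(\Gamma_{d-1-k}(k)) = \max(0, s-d+k)$ by a short case analysis, check $\mu_s$, $\tau_s$, $\phi_s$ directly, and read off the support by intersecting with the index ranges of $\mathcal H_{[0,d]}$. Your split on $m = s+n$ is organized a little differently from the paper's split on $s$ versus $d-1-k$ (and spells out the termwise-zero range $m < d-k$ more explicitly), but the content of the argument is the same.
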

\begin{proof}
It is straightforward to check the functionals $\phi_s, \mu_s,$ and $\tau_s$. 
Recall that
\[
\Gamma_{d - 1 - k}(k) = 
\begin{array}{|c||c|c|c|c|c|c|}
\hline
i & d & d-1 & \cdots & d-k & d-k-1 & \cdots\\
\hline
\gamma_{0, i}  & 0 & 0 & 0  & 0  & 0 & 0\\
\gamma_{1,i}  & k & -1 & -1 & -1 & 0 & 0 \\
\gamma_{2, i} & 0 & 0  & 0 & 0 & 1& 0\\
\hline
\end{array},
\]
from which it is also clear that $\pi_{0,s} (\Gamma_{d - 1 - k}(k)) = \max (0, s - d + k)$.
Now, we consider $\pi_{n,s}$ with $n > 0$ starting with $s \leq d - 1 - k$. By the formula for $\pi_{n,s}$, we get (recall that $s + n < d$) that
\[
\pi_{n,s} (\Gamma_{d - 1 - k}(k)) = (d - k - s) - (n+1) - \sum_{i = s+n+1}^{d - 1} 1 + k
= 0.
\]
If $d - 1 - k < s$, then we only have contribution from $\gamma_{1,s}$:
\[
\pi_{n,s} (\Gamma_{d - 1 - k}(k)) = - (n+1) - \sum_{i = s+n+1}^{d - 1} 1 + k 
= s -d + k.
 \]
\end{proof}

The following relations on our equations are essential for the algorithm. 

\begin{lemma} \label{triangular identities}
For all $0 \leq i < d$ and $k > 0$ with $k+i<d$, we have
\[\phi_{i} + 2 \pi_{k, i + 1} = \pi_{k + 1, i} + \pi_{k - 1, i + 2} \text{ and }
\phi_i + 2 \pi_{0, i + 1} = \pi_{1, i} + \pi_{0, i + 2}.
\]
\end{lemma}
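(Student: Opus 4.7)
The plan is to verify both identities by a direct expansion, working on an arbitrary matrix $A = \{a_{i,j}\} \in \mathbb M$ and comparing the coefficients of $a_{0,j}$, $a_{1,j}$, and $a_{2,j}$ on the two sides. Since the identities are linear in $A$, it suffices to check that the two sides agree as functionals, which amounts to a coefficient-by-coefficient computation. The contributions of $a_{0,j}$ are trivial on both sides ($\phi$ and $\pi_{n,s}$ vanish on the first row), so the work is concentrated in rows one and two.

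For the first identity, I would unfold
\[
\pi_{k+1,i}(A) + \pi_{k-1,i+2}(A) = \sum_{j > i+k+1}\!\!\! a_{1,j} + (k+2)a_{1,i+k+1} + \sum_{j=0}^{k}(j+1)a_{2,i+j} + \sum_{j > i+k+1}\!\!\! a_{1,j} + k\, a_{1,i+k+1} + \sum_{j=0}^{k-2}(j+1)a_{2,i+2+j},
\]
and
\[
\phi_i(A) + 2\pi_{k,i+1}(A) = a_{2,i} + 2\sum_{j > i+k+1}\!\! a_{1,j} + 2(k+1)a_{1,i+k+1} + 2\sum_{j=0}^{k-1}(j+1)a_{2,i+1+j}.
\]
On the $a_{1,\cdot}$ side, both agree immediately: the tail sum gets coefficient $2$ and the boundary term $a_{1,i+k+1}$ gets coefficient $2k+2$. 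The only real bookkeeping is the $a_{2,\cdot}$ side. Collecting the two $a_{2}$-sums on the right, the coefficient of $a_{2,i}$ is $1$, of $a_{2,i+1}$ is $2$, and for $2 \leq m \leq k$ the coefficient of $a_{2,i+m}$ is $(m+1)+(m-1) = 2m$; on the left the coefficient of $a_{2,i}$ is $1$ (from $\phi_i$) and of $a_{2,i+m}$ for $1 \leq m \leq k$ is $2m$ (from $2\pi_{k,i+1}$). These agree.

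For the second identity the computation is easier (and is essentially the $k=0$ case, restated to avoid the undefined symbol $\pi_{-1,\cdot}$): the only identity to check on the $a_{1,\cdot}$ side is that $\sum_{j>i+2} a_{1,j} + a_{1,i+2} = \sum_{j>i+1} a_{1,j}$, and the single $a_{2,i}$ term is contributed by $\pi_{1,i}$ on the right and by $\phi_i$ on the left.

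There is no genuine obstacle here; the only care required is to match the ranges of summation correctly when $m=0,1$ versus $m \geq 2$ on the right-hand side of the first identity, since the second $a_2$-sum starts at $i+2$. Once the coefficients of $a_{2,i}$, $a_{2,i+1}$, and $a_{2,i+m}$ (for $m \geq 2$) are tracked separately, the computation closes. I would therefore present the proof as a two-row table of coefficients for $a_{1,j}$ and $a_{2,j}$, verifying the equality in each column.
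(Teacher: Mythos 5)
Your proposal is correct and follows essentially the same route as the paper: both expand each side directly from the definition of $\pi_{n,s}$ and $\phi_s$ and match the coefficients of $a_{1,j}$ and $a_{2,j}$ term by term. The only cosmetic difference is that you phrase it as a coefficient comparison while the paper regroups the expanded sums, but the computation is identical.
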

\begin{proof}
We first check the first equality. For a matrix $A = \{a_{i,j}\}$, the left-hand side is
\[
\ds \phi_{i}(A) + 2\pi_{k,i+1}(A) = a_{2,i} + \sum_{j = k+i+2}^d 2a_{1,j} + 2(k+1)a_{1,k+i+1} + \sum_{j=0}^{k-1} 2(j+1) a_{2,j+i+1}.
\]
The right-hand side, on the other hand, is
\begin{align*}
\ds \pi_{k + 1, i}(A) + \pi_{k - 1, i + 2} (A) & = \left[\sum_{j=k+i+2}^d a_{1,j} + (k+2)a_{1, k+i+1} + \sum_{j=0}^{k} (j+1) a_{2,j+i}\right] \\& +  \left[\sum_{j=k+i+2}^d a_{1,j} + ka_{1,k+i+1} + \sum_{j=0}^{k-2} (j+1) a_{2,j+i+2}\right] \\
& = \left[\sum_{j=k+i+2}^d a_{1,j} + (k+2)a_{1,k+i+1} + a_{2,i} + 2a_{2,i+1} + \sum_{j=2}^{k} (j+1) a_{2,j+i}\right]  \\
& + \left[\sum_{j=k+i+2}^d a_{1,j} + ka_{1,k+i+1} + \sum_{j=2}^{k} (j-1) a_{2,j+i}\right] \\
& = \sum_{j = k+i+2}^d 2a_{1,j} + 2(k+1)a_{1,k+i+1} + a_{2,i} + 2a_{2,i+1} + \sum_{j=2}^{k} 2j a_{2,j+i},
\end{align*}
and the two sides are then easily seen to agree. For the second relation, the left-hand side is
\[
\ds \phi_i(A)+2\pi_{0,i+1}(A) = a_{2,i} + \sum_{j = i+1}^d 2a_{1,j},
\]
which coincides with the right-hand side:
\[
\ds \pi_{1,i}(A) + \pi_{0,i+2}(A) = \sum_{j = i+1}^d a_{1,j} + a_{i+1} + a_{2,i} + \sum_{j=i+2}^d a_{1,j}.
\]
\end{proof}

\begin{algorithm}\label{facet algo}
Let $A = \{a_{i,j}\} \in \mathbb M_{[0,d]}$ be in the cone $D_{[0,d]}$ defined, as described above, by the set of functionals $\mathcal H_{[0, d]}$. The strategy of the following algorithm is to reduce $A$ to $0$ by subtracting a finite positive linear combination of tables from $\Delta\Lambda_2 \cap \mathbb M_{[0,d]}$. To do so, we will induct on $d \geq 1$.

\begin{itemize}
\item[{\bf Step 0:}] Replace $A$ by 
$A - a_{2, d}E_{2, d} - \sum_{i = 0}^d a_{0,i}E_{0, i}$. Set $w = d$.  Proceed to {\bf Step 1}. 

\item[{\bf Step 1:}] 
If $w = 0$ then proceed to {\bf Step 3}.
Replace $A$ with $A - a_{2, w - 1}E_{2, w - 1}$. 
Set $k = 1$ and proceed to {\bf Step 2}.

\item[{\bf Step 2:}] 
If $k = w$ then proceed to {\bf Step 3}. If $a_{1, w} = 0$, then set $w = w - 1$ and return 
to {\bf Step 1}. 
Set 
\begin{align*}
m &= \min \{\phi_{w- 1 - k} (A), \pi_{n,s} (A)/(s-w+k) \mid w + 1 - k \leq s \leq w,  0 \leq n \leq \max (0, \min (k - 2, w-s - 1))\}.
\end{align*}
Replace $A$ with $A - m\Gamma_{w - 1 - k}(k)$. Set $k = k + 1$. Repeat {\bf Step 2}.

\item[{\bf Step 3:}] 
Replace $A$ with $A - \sum_{i = 0}^d a_{1, i} E_{1, i}$. 
\end{itemize}
\end{algorithm}
\begin{proof}
Both cycles described in the algorithm are finite, so it will terminate in finitely many steps. 
We need to show that $A = 0$ at the end of the algorithm and all appearing coefficients are non-negative.
We will use induction on $d$. In the base case of  $d = 1$ we 
note that the algorithm provides us the decomposition 
\begin{align*}
A &= a_{0, 0} E_{0, 0} + a_{0, 1}E_{0, 1} + a_{1, 0} E_{1, 0} + a_{1, 1} E_{1, 1} + a_{2, 0} E_{2, 0} + a_{2, 1} E_{2, 1}\\
&= \mu_0 (A) E_{0, 0} + \mu_1(A) E_{0, 1} + \tau_0 (A) E_{1, 0} + \pi_{0,1}(A) E_{1, 1} + \phi_0(A) E_{2, 0} + \phi_1(A) E_{2, 1}.
\end{align*}
We will use this strategy in general by expressing the coefficients in terms of $H(A)$ for $H \in \mathcal H$ and 
showing that $A$ remains in the cone defined by $\mathcal H_{[0, d]}$ throughout the algorithm.

We note that, for $n = d -1$ or $d$,
$a_{2, n} = \phi_n (A) \geq 0$ and $H (A - a_{2, n}E_{2, n}) = H(A) \geq 0$ for all $\phi_n \neq H \in \mathcal H_{[0,d]}$ 
as one can easily check that $\{\phi_n\} = \Supp(E_{2, n})$. 
Similarly, for all $n$, $a_{0, n} = \mu_n (A)$ and $H(A - \mu_n (A)E_{0,n}) = H(A) \geq 0$ for all $\mu_n \neq H \in \mathcal H_{[0,d]}$.
Hence {\bf Steps 0} and {\bf 1} produce a table that is still inside $D_{[0,d]}$. 
In {\bf Step 2}, let us concentrate first on the case of $w = d$.
In this case, Lemma~\ref{Lem Not Gamma} explains that the functionals used in the definition of $m$
are exactly $\Supp(\Gamma_{d - 1 - k} (k))$. 
It follows that $H (A - m\Gamma_{d - 1 - k} (k)) = H(A) \geq 0$ for $H \notin \Supp(\Gamma_{d - 1 - k} (k) )$.
Moreover, by its definition $m \geq 0$ and $H (A - m\Gamma_{d - 1 - k} (k)) \geq 0$ for $H \in \Supp(\Gamma_{d - 1 - k} (k) )$
by Lemma~\ref{Lem Not Gamma}.

Now, we want to show that induction allows us to assume that $w = d$. To do so, we observe that we may
shrink the window $[0,d]$ after finishing the loop in {\bf Step 2}. 
\begin{claim}
For $w = d$, if repeating {\bf Step 2} does not result in $a_{1, d} = 0$ ({\it i.e., } $k = d$ is reached),
then $a_{2, 0} = \cdots = a_{2, d} = 0$.
\end{claim}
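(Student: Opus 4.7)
The plan is to show that, under the hypothesis — which really forces $a_{1,d}$ to remain strictly positive throughout Step 2, since otherwise the loop would decrease $w$ via the $a_{1,w}=0$ branch rather than reach $k=d$ — the minimum $m_k$ at every iteration $k\in\{1,\dots,d-1\}$ is attained by $\phi_{d-1-k}(A)$ rather than by any of the competing ratios $\pi_{n,s}(A)/(s-d+k)$. Because $\Gamma_{d-1-k}(k)$ has a single nonzero row-2 entry (a $+1$ at position $d-1-k$), and later $\Gamma_{d-1-k'}(k')$ with $k'>k$ touch row 2 only at strictly smaller positions, taking $m_k=\phi_{d-1-k}(A)=a_{2,d-1-k}$ permanently zeros that entry. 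Combined with Step 0 and Step 1 zeroing $a_{2,d}$ and $a_{2,d-1}$, this yields the desired $a_{2,0}=\cdots=a_{2,d}=0$.

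I would prove the key statement by induction on $k$, so that $a_{2,j}=0$ for all $j\ge d-k$ on entering iteration $k$. Suppose for contradiction $m_k=\pi_{n,s}(A)/(s-d+k)$ for some admissible $(n,s)\neq(0,d)$, and set $q:=s-d+k\ge 1$, $j:=d-s-n$. Since $s\ge d+1-k$, all $\phi_l$ with $l\ge d-k$ vanish by induction, so $\pi_{n,s}(A)$ collapses to $(n+1)T_{s+n}-nT_{s+n+1}$, with $T_p:=\pi_{0,p}(A)$. Iterating the first identity of Lemma~\ref{triangular identities} precisely $q$ times — valid step by step because the intervening $\phi$'s are now zero — yields
\[
\pi_{n,s}(A)\;=\;\frac{q}{n+q}\,T_{d-j}(A)\;+\;\frac{n}{n+q}\,\pi_{n+q,d-k}(A),
\]
and the second summand is nonnegative because $\pi_{n+q,d-k}\in\mathcal H_{[0,d]}$ (the index bounds check out using $k\le d-1$).

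Simultaneously, the telescoping chain of facet inequalities $\pi_{k-i,d-k}(A)\ge 0$ for $i=1,\dots,j$ reduces, by induction, to $(k-i+1)T_{d-i}\ge(k-i)T_{d-i+1}$; multiplying these gives $T_{d-j}\ge \tfrac{k-j}{k}T_d$. Combining the two estimates and invoking the algebraic identity $k-j=n+q$ — immediate from the definitions — the factors cancel to the clean inequality $\pi_{n,s}(A)/(s-d+k)\ge \pi_{0,d}(A)/k$. If this is strict, $m_k$ cannot equal $\pi_{n,s}(A)/(s-d+k)$, contradicting the assumption. If it is equality, then $m_k=\pi_{0,d}(A)/k$ as well, forcing $a_{1,d}(A')=0$ after the iteration, again contradicting the hypothesis. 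Hence $m_k=\phi_{d-1-k}(A)$, closing the induction. The main technical hurdle is spotting these two coordinated estimates from the cone and the identity $k-j=n+q$ that makes them combine tightly into a single useful inequality.
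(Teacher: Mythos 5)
Your proof is correct and takes a genuinely different route from the paper's. The paper works with the matrix $A' = A - m\Gamma_{d-1-k}(k)$ after the subtraction: if the minimum is attained by some $\pi_{n,s}$ ratio, then $\pi_{n,s}(A') = 0$, and since the first identity of Lemma~\ref{triangular identities} makes the values $\pi_{m, s+n-m}(A')$ into an arithmetic progression along the diagonal $n+s = \text{const}$ (wherever the intervening $\phi$'s vanish) which is everywhere nonnegative, a zero in the interior forces the whole progression to vanish, giving $\pi_{0, s+n}(A') = 0$; the second identity then propagates this to $\pi_{0,d}(A') = 0$, i.e.\ $a_{1,d}' = 0$, contradicting the hypothesis. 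You instead work with $A$ before the subtraction and turn the same arithmetic-progression structure into a barycenter formula $\pi_{n,s}(A) = \frac{q}{n+q}\pi_{0,d-j}(A) + \frac{n}{n+q}\pi_{n+q,d-k}(A)$, then pair it with a separate telescoping chain of facet inequalities $\pi_{k-i,d-k}(A)\ge 0$, $i=1,\dots,j$, on a different family of diagonals to get the sharp quantitative bound $\pi_{n,s}(A)/(s-d+k) \ge \pi_{0,d}(A)/k$. This directly shows that a $\pi$-ratio can beat the $(0,d)$-ratio only by tying, which again zeroes $a_{1,d}$. Your approach is more explicit and arguably sharper (it gives the clean inequality among ratios rather than a zero-propagation), at the cost of the extra identity $k-j=n+q$ and the second chain. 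Two small polish points: the phrase ``multiplying these gives $T_{d-j}\ge\frac{k-j}{k}T_d$'' implicitly cancels the intermediate $T_{d-i}$; it is cleaner to chain the inequalities inductively as $T_{d-i}\ge\frac{k-i}{k-i+1}T_{d-i+1}\ge\frac{k-i}{k-i+1}\cdot\frac{k-i+1}{k}T_d$, which avoids any worry about vanishing factors. And you should dispatch the case $(n,s)=(0,d)$ explicitly: if $m_k=\pi_{0,d}(A)/k$, then $\pi_{0,d}(A')=\pi_{0,d}(A)-m_k\cdot k=0$ immediately, again contradicting the hypothesis, so it does not need the estimates at all.
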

\begin{proof}
Let $A$ be the matrix at the beginning of {\bf Step 2} and 
$A' = A - m \Gamma_{d - 1 - k} (k)$, the result of the step.
By induction on $k$ we show that either $a'_{1, d} = 0$
or in $a'_{2, d - k - 1} = \cdots = a'_{2, d} = 0$. 
At $k = 1$, we either have $m = \phi_{d - 2} (A)$ or $m = \pi_{d} (A)$.
In the second case $a'_{1, d} = \pi_{0, d} (A') =  0$ and the claim follows, or 
$a'_{2, d - 2} = \phi_{d-2} (A') = 0$.

By the induction hypothesis, we may assume that $a_{2, d - k - 2} = \cdots = a_{2, d} = 0$.
If $m = \phi_{d - 1 - k} (A)$ then  $a'_{2, d - k - 1} = \cdots = a'_{2, d} = 0$ and we are done.
Otherwise,  $m = \pi_{n,s} (A)/\pi_{n,s} (\Gamma_{d - 1 - k} (k))$, so $\pi_{n,s} (A') = 0$. 
If $n > 0$, then $d + 1 -k \leq s \leq d - n - 1$, so we may use Lemma \ref{triangular identities} to show that $\pi_{0, s + n} (A') = 0$.
If $n = 0$, then necessarily $d + 1 - k \leq s \leq d$, so we may use again Lemma \ref{triangular identities} and the fact that $A' \in D_{[0,d]}$ to show that $\pi_{0, d} (A') = 0$ as well.

\end{proof}
If $a_{2, 0} = \cdots = a_{2, d} = 0$, then the equations $T_{i}$ and $P_{0, d}$ 
show that $a_{1, i} \geq 0$ for $0 \leq i \leq d$. Hence when 
we are moved to {\bf Step 3}, we subtract a positive linear combination of $E_{1, i}$ and the resulting table is $0$.
Otherwise, when we leave {\bf Step 2}, $a_{0, d} = a_{1,d} = a_{2, d} = 0$ and we may now consider $A$ as a table 
in $D_{[0, d- 1]}$ by Remark~\ref{restrict H}. This concludes the induction step. 
\end{proof}

\section*{Acknowledgments} We thank Mats Boij, Giulio Caviglia, Daniel Erman, Alessio Sammartano, and Greg Smith for several helpful conversations. We thank the anonymous referee for very helpful comments, and for suggesting a shorter proof of Theorem \ref{THM decomposition}.

\bibliographystyle{alpha}
\bibliography{References}
\end{document}